\begin{document}
%%%%%%%%%%%%%%%%%%%%%%%%%%%%%%%%%%%%%%
%
%DEFINITIONS
\newtheorem{theorem}{Theorem}
\newtheorem{thm}{Theorem}
\newtheorem{prop}{Proposition}
\newtheorem{lemma}{Lemma}

\newcommand{\beqn}{\begin{equation}}
\newcommand{\eeqn}{\end{equation}}
\def\R{{\mathbb R}}
\def\D{{\cal D}}
\newcommand{\cleq}{\preccurlyeq}
\newcommand{\nn}{\nonumber}
\def\la{{\langle}}
\def\ra{{\rangle}}
\def\pa{{\partial}}
\def\ep{\epsilon}
\def\F{{\cal F}}
\def\oml{{\ddot{\text{o}}}}

\def\L{{\cal L}}
\def\M{{\mathcal{M}}}

\def\cR{{\cal R}}
\def\Ra{{\cal R}}
\def\Rad{\mathcal{R}} % Radon transform

\def\W{\mathcal{W}}
\def\U{\mathcal{U}}
\def\V{\mathcal{V}}
\def\S{{\mathscr{S}}}
\def\Sdot{{\dot{\S}}}
\def\Cdot{{\dot{C}}}
\def\Ccd{{ \Cdot_c^\infty(\R^n)}}

\title{Recovering initial values from light cone traces of solutions of the wave equation}

\author{
Rakesh \\
Department of Mathematical Sciences\\
University of Delaware\\
Newark, DE 19716, USA\\
~\\
Email: rakesh@udel.edu\\
\and
Tao Yuan\\
TD Securities\\
31 West 52nd Street \#18 \\
New York, NY, USA\\
~\\
Email: tao.yuan@tdsecurities.com
 }

\date{March 29, 2018}
\maketitle

\begin{abstract}
We consider the problem of recovering the initial value, from the trace on the light cone, of the solution of an initial
value problem for the wave equation. 
When the space is odd dimensional, we show that the map from the initial value to the traces of the (even or odd in time)
solutions on the light cone 
is an isometry and we characterize the range of this map and construct its inverse. We do this by relating the problem to 
the recovery of a function from its spherical means over all spheres through the origin, which in turn is related to the 
Radon transform inversion via the inversion map on $\R^n$.
\end{abstract}

%\begin{keywords}
%Goursat problem, spherical means, inverse problem, wave equation
%\end{keywords}

%\begin{AMS}
%35R30, 78A46, 86A22
%\end{AMS}

%%%%%%%%%%%%%%%%%%%%%%%%%%%%%%%%

% !TEX root = driver.tex
%

\section{Introduction}

\subsection{The Problem}

Suppose $f(x), g(x)$ are smooth functions on $\mathbb{R}^n$, $n>1$, and $w(x,t)$ is the solution of the 
IVP (Initial Value Problem)
\begin{align}
& w_{tt}-\Delta w=0,\quad (x,t)\in \mathbb{R}^n\times \mathbb{R}, \label{eq:wxt} \\
& w(x,0)=f(x),\ w_t(x,0)=g(x),\quad x\in \mathbb{R}^n. \label{eq:wx0}
\end{align}
This is a well posed problem (see \cite{CHII}) and explicit formulas are known for $w$ in terms of $f$ and $g$.  
Define the linear maps
$\W^{\pm} : C^\infty(\R^n) \times C^\infty(\R^n) \mapsto C(\R^n)$ with
\begin{equation}\label{eq:wxx}
 \W^{\pm}(f,g)(x) = w(x, \pm|x|), \qquad x \in \R^n
\end{equation}
which map\footnote{
The traces, $W^{\pm}(f,g)$, may not be smooth at the origin, that is why the codomains for $\W^{\pm}$ are $C(\R^n)$.
}
 the initial data $(f,g)$ to the traces, of the solution $w$, on the light cones $t = \pm |x|$. Our goal is the inversion of the linear 
 map $(f,g) \to (W^+(f,g), W^-(f,g))$.

If $u(x,t)$ and $v(x,t)$ are the even and the odd parts, w.r.t $t$, of $w(x,t)$, that is,
\begin{align*}
u(x,t) =\frac{w(x,t)+w(x,-t)}{2}, \qquad v(x,t) =\frac{w(x,t)-w(x,-t)}{2},
\end{align*}
then $u(x,t), v(x,t)$ are the unique solutions of the IVPs:
\begin{align}
& u_{tt}-\Delta u=0,\quad (x,t)\in \mathbb{R}^n\times \mathbb{R} \label{eq:ude} \\
& u(x,0)=f(x),\ u_t(x,0)=0,\quad x\in \mathbb{R}^n \label{eq:uic}
\end{align}
and 
\begin{align}
& v_{tt}-\Delta v=0,\quad (x,t)\in \mathbb{R}^n\times \mathbb{R} \label{eq:vde} \\
& v(x,0)=0,\ v_t(x,0)=g(x),\quad x\in \mathbb{R}^n \label{eq:vic}
\end{align}
respectively. Define the linear maps $\U : C^\infty(\R^n) \mapsto C(\R^n)$ and $\V : C^\infty(\R^n) \mapsto C(\R^n)$ with
\[
(\U f)(x)=u(x,|x|), \qquad (\V g)(x) = v(x,|x|), \qquad x \in \R^n.
\]

Now
\[
\U f = \left (\frac{ \W^+ + \W^-}{2}  \right )(f,g), \qquad \V g= \left ( \frac{ \W^+ - \W^-}{2} \right )(f,g)
\]
so the inversion of $(f,g) \to (W^+(f,g), W^-(f,g))$ is equivalent to the 
inversion of $\U$ and $\V$. When $n$ is odd, we show that $\U$ and $\V$ are isometries, characterize the ranges of 
extensions of $\U$ and $\V$ and give inversion formulas for $\U$ and $\V$. From this one easily derives an inversion procedure for the map $(f,g) \to (W^+(f,g), W^-(f,g))$; specifically
\[
(W^+(f,g), W^-(f,g) ) \to (\U f, \V g) \to (f,g).
\]
The isometry and the ranges of $\U$ and $\V$ may lead to similar statements about the map $(f,g) \to (W^+(f,g), W^-(f,g))$ 
but they seem cumbersome and we have not explored this issue.

If $f=g$ then one may verify that $u=v_t$ but the connection between the traces of $u$ and $v$ on $t=|x|$, that is 
between $\U$ and $\V$, is cumbersome and we do not use it. We deal with $\U$ and $\V$ independently, though in 
somewhat similar 
fashion, without exploiting the cumbersome connection.

At first glance it may appear that the traces of $w$ on $t=\pm |x|$ do not have enough information to recover $f,g$ since 
most of the initial energy is surely propagated away from the cone $t=\pm |x|$. However, that is incorrect as the propagation
of most of the
initial energy does leave traces on the double cone $t=\pm |x|$ because any ray originating at a point on $t=0$, such as
\[
x = a+ s\theta, ~~ t=s, \qquad -\infty <s < \infty,
\]
for a fixed $a \in \R^n$, $|\theta|=1$, will intersect the double cone $t=\pm |x|$, except for rays in a zero measure 
(for a fixed $a$) set of directions $\theta \perp a$ - see \cite{BK} for another problem where the situation with the rays is
 the same but there is still an isometry.
 
The results are obtained using the following connection. The traces on the cone $t=|x|$, of the solutions of IVPs for the
wave equation, may be written in terms of spherical means of the initial data over spheres through the origin,
 which in turn can be related to the 
Radon transform via the inversion map $x \to x/|x|^2$ which maps spheres through the origin to hyperplanes. We exploit the 
results about the Radon transform to obtain our results.

\subsection{Motivation and history}\label{subsec:history}
Our problem may be regarded as loosely equivalent to the exterior Goursat problem where one studies the well-posedness
of the characteristic boundary value problem for $w(x,t)$ 
\begin{gather*}
\Box w = 0, \qquad  (x,t) \in \R^n \times \R, ~ |x| \geq |t|
\\
w(x,|x|) = \phi(x), ~ w(x,-|x|) = \psi(x), ~ \qquad x \in \R^n
\end{gather*}
where $\phi$, $\psi$ satisfy some matching condition at $x=0$.

The existence, uniqueness and stability of the solution of the above problem in the {\bf interior of the cone} is well studied 
and explicit solutions are available in \cite{CHII}, though the optimal regularity result for $w(x,t)$ at $(x=0,t=0)$ is a tricky matter - see \cite{Cag}. The interior problem and its generalization to other hyperbolic operators have attracted attention because of applications to General Relativity. The exterior problem for more general hyperbolic PDEs is also of interest in General Relativity - see \cite{IK1}, \cite{IK2}, and there are unique continuation results for more general hyperbolic
PDEs for a slightly different problem - see \cite{Friedlander}, \cite{IK1}, \cite{IK2}, \cite{lerner}, \cite{whitman}. We study the
exterior problem only for the wave equation.

Since solutions of the IVP for the wave equation may be expressed in terms of the spherical mean values of the initial data, and since traces on the light cone are related to spherical mean values on spheres through the origin, it is clear that there is a connection between the inversion of $\U$ and $\V$ and the recovery of a function from its spherical mean
values over all spheres through the origin. The results for the spherical mean value inversion problem do not lead directly to results for the $\U, \V$ inversion problem, except for the $n=3$ case. Further, in our opinion, the results for $\U$ and $\V$ seem more compact and aesthetically pleasing.
As far as we know, the inversion of $\U$ and $\V$ (other than the $n=3$ case) 
has received little attention while 
there has been 
considerable work on the spherical mean inversion problem mentioned above - 
see \cite{chen} \cite{CQ}, \cite{moon}, \cite{Quinto}, \cite{Rhee1}, \cite{Rhee2}, \cite{Rhee3}, \cite{Ya}. 
These articles have studied questions such as the recovery of a function from its spherical averages over all spheres through the 
origin or determining all functions which have zero spherical averages, over all spheres through the origin which lie in a fixed ball 
through the origin. 

Our main tool to tackle the $\U, \V$ problem - using the inversion map $x \to x/|x|^2$ to relate the problem 
to the Radon transform inversion - was already used in some of the articles mentioned above 
(also see \cite{rubinwang}). However, we believe, even our Theorem \ref{thm:mean} about spherical mean inversion is new. 
Our work is mainly directed 
towards the $\U, \V$ inversion problem for which 
we have new results - our Theorems \ref{thm:isometry}, \ref{thm:first}, \ref{thm:second}. 

The problem of recovering a function from its spherical averages over a family of spheres has a long history. The problem of 
recovering a function from its spherical averages over all spheres with a fixed radii or over spheres with centers on a plane or 
centers on a sphere etc.~have been studied and many of these problems have applications to medical imaging. The field is too 
broad for a general survey article and we suggest the introduction of \cite{FPR} as perhaps a reasonable starting point 
for the literature on such problems.

\subsection{Main Results}

For any positive integer $n$, $S^{n-1}$ will denote the unit sphere in $\R^n$, $\omega_{n-1}$ its surface area, 
$\pa_r = |x|^{-1} x \cdot \nabla$ will be the radial derivative
and $\delta^m(t)$ will denote the m-th derivative of the Dirac delta distribution.  For any surface $\Gamma$ in $\R^n$, $dS$
denotes the surface measure on $\Gamma$, sometimes written as $dS_x$ if the variable $x$ represents an arbitrary point on 
$\Gamma$.
Further, $\S(\R^n)$ will denote the Schwartz space, 
$C_c^\infty(\R^n)$ will denote the set of compactly supported smooth functions on $\R^n$ and
\begin{align*}
\Cdot^\infty(\R^n) & =  \{h(x) \in C^\infty(\R^n): 0 \notin \text{supp}~ h \},
\\
\Ccd & = \{h(x) \in C_c^\infty(\R^n): 0 \notin \text{supp}~ h \}.
\end{align*}
 For any $h \in \S(\R^n)$, its Radon transform is
 \[
 (\Rad h)(\theta,s) := \int_{x \cdot \theta =s} f(x) \, dS_x = \int_{\R^n} h(x) \, \delta( x \cdot \theta -s) \, dx
 , \qquad \theta \in \R^n, ~ |\theta|=1, ~ s \in \R
 \]
and its spherical mean value on a sphere centered at $c \in \R^n$ of radius $t \in \R$ is defined as
 \begin{align}
 (\M h)(c, t) & := \frac{1}{\omega_{n-1}} \int_{S^{n-1}} h(c+t \theta) \, d \theta,
 \nonumber
 \\
 & =  \frac{1}{\omega_{n-1} t^{n-1} } \int_{|x-c|=t} h(x) \, dS_x
 = \frac{2}{\omega_{n-1} t^{n-2} } \int_{\R^n} h(x) \, \delta( t^2 - |x-c|^2 ) \, dx, \qquad t>0.
 \label{eq:Mhd}
 \end{align}
 For any non-negative measurable function $\rho$ on $\R^n$, we define $ L^2(\R^n, \rho)$ to be the Hilbert space of all 
 measurable $f : \R^n \to \R$ for which $ \int_{\R^n} \rho(x) \, |f(x)|^2 \, dx $ is finite.

For $n \geq 3$, $n$ odd, we show that $\U$ and $\V$ are isometries, give inversion formulas, and characterize the 
ranges of the
extensions of these maps. {\bf The problem is unresolved for the even $n$ case} though we have unpublished partial results
for the even $n$ case.

\begin{theorem}[Isometry and Range] \label{thm:isometry}
If $n \geq 3$, $n$ odd, then for all $f, g \in \Ccd$ we have  $\U f, \V g  \in \Cdot^\infty(\R^n)$ and
\begin{align*}
\int_{\mathbb{R}^n} \frac{|f(x)|^2}{|x|^2} \; dx
& =2\int_{\R^n} \frac{|(\U f)(x)|^2}{|x|^2} \; dx,
\\
\int_{\R^n} |x|^2 \, |g(x)|^2 \; dx
& =8\int_{\R^n} |x|^{3-n}  \, |  \partial_r(|x|^{\frac{n-1}{2}}(\V g)(x))  |^2\; dx.
\end{align*}
Further, the map $f \to \U f$ has a continuous linear extension as a {\bf bijection} from $L^2(\R^n, |x|^{-2})$ to itself, and the map
$g \to |x|^{(1-n)/2} \pa_r ( |x|^{(n-1)/2} (\V g)(x) )$ has a continuous linear extension as a {\bf bijection} from
$L^2(\R^n, |x|^2)$ to itself.
\end{theorem}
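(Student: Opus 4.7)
The strategy, indicated by the authors themselves, is to use the inversion $y\mapsto y/|y|^2$ to convert spheres through the origin into hyperplanes, thereby reducing the isometry to a Radon-transform Plancherel statement. First I would invoke the odd-dimensional Poisson-Kirchhoff representation, which writes
\[
u(x,t)=c_n\,\pa_t\Big(\tfrac{1}{t}\pa_t\Big)^{(n-3)/2}\!\bigl[t^{\,n-2}(\M f)(x,t)\bigr], \qquad v(x,t)=c_n\Big(\tfrac{1}{t}\pa_t\Big)^{(n-3)/2}\!\bigl[t^{\,n-2}(\M g)(x,t)\bigr]
\]
for an explicit constant $c_n$, so that $\U f(x)$ and $\V g(x)$ are differential polynomials in $t$ of the spherical mean, evaluated at $t=|x|$. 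That $\U f,\V g \in \Cdot^\infty(\R^n)$ is then immediate from finite propagation speed: since $f,g$ are supported in $\{|y|\geq\rho\}$ for some $\rho>0$, the domain of dependence argument forces $u(x,|x|)=v(x,|x|)=0$ whenever $|x|<\rho/2$, while smoothness at all other points follows from the smoothness of $u,v$.

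Next I would push the spherical mean through the inversion. The sphere $\{|y-x|=|x|\}$ is sent by $y^{\ast}=y/|y|^2$ to the hyperplane $\{y^{\ast}\cdot\hat x = 1/(2|x|)\}$, with the surface measure scaling by the conformal factor $|y^{\ast}|^{-2(n-1)}$. Setting $F(y):=|y|^{-2(n-1)}f(y/|y|^2)$, this yields
\[
(\M f)(x,|x|) \;=\; \frac{1}{\omega_{n-1}|x|^{n-1}}(\Rad F)\bigl(\hat x,\tfrac{1}{2|x|}\bigr),
\]
with an analogous identity for $g$. To handle the $t$-derivatives at $t=|x|$, one parameterises the cone $\{t=|x|\}$ by $(\theta,s)=(\hat x,1/(2|x|))$ and checks that, along this cone and after the spherical integration, the operator $\pa_t(\tfrac{1}{t}\pa_t)^{(n-3)/2}$ applied to $t^{n-2}(\M f)(x,t)$ reduces to a constant multiple of $\pa_s^{(n-1)/2}\Rad F$ evaluated at $(\hat x,1/(2|x|))$.

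At this stage I would apply the Radon Plancherel identity, which for odd $n$ (so that $(n-1)/2$ is an integer) takes the form
\[
\int_{S^{n-1}}\!\int_\R \bigl|\pa_s^{(n-1)/2}(\Rad F)(\theta,s)\bigr|^2\,ds\,d\theta \;=\; 2(2\pi)^{n-1}\!\int_{\R^n}|F(y)|^2\,dy.
\]
On the right, the substitution $y=x/|x|^2$, together with the weight $|y|^{-2(n-1)}$ built into $F$ and the Jacobian $|x|^{-2n}$, produces exactly $\int|f(x)|^2|x|^{-2}\,dx$. On the left, changing variables $(\theta,s)\mapsto x=\theta/(2s)$ and using the representation of $\U f$ gives $\int|\U f|^2|x|^{-2}\,dx$, and the remaining constants collapse to $2$. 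The $\V$ identity is obtained analogously with one fewer $\pa_t$ in the Kirchhoff formula; the preconditioning $g\mapsto |x|^{(1-n)/2}\pa_r(|x|^{(n-1)/2}\V g)$ supplies precisely the missing derivative along the cone, which is why the weight $|x|^{3-n}$ and the factor $8$ appear.

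Finally, both $\U$ and the preconditioned $\V$-map are isometries on the dense subspace $\Ccd$ of the relevant weighted $L^2$-space, hence extend uniquely by continuity to continuous linear maps on the whole space with closed range. Surjectivity then follows by running the construction backwards: the Radon transform maps onto its natural range (characterised by Helgason's moment conditions), and the explicit Radon inversion formula combined with the inversion $y\mapsto y/|y|^2$ produces the required preimage in the appropriate weighted $L^2$-space. The main technical obstacle will be the bookkeeping in the middle two steps, namely converting mixed $t$- and $x$-derivatives restricted to the cone into a clean $s$-derivative of $\Rad F$ while tracking all conformal Jacobians and powers of $|x|$, so as to land on exactly the constants $2$ and $8$ rather than unidentified universal factors.
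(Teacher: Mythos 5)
Your overall strategy --- push the spherical means through the inversion $y\to y/|y|^2$ and invoke the Radon--Plancherel identity --- is exactly the paper's, and your treatment of $\U f,\V g\in\Cdot^\infty(\R^n)$ and of the final density/extension step is sound. But the central identity, the one you defer to as ``one checks,'' is mis-stated in two ways that are not mere bookkeeping: as written they would yield the wrong theorem. First, the conformal weight: with your $F(y)=|y|^{-2(n-1)}f(y/|y|^2)$ the right-hand side of the Plancherel identity is $\int_{\R^n}|F|^2\,dy=\int_{\R^n}|x|^{2n-4}\,|f(x)|^2\,dx$, which is the weight appearing in the spherical-mean result (Theorem \ref{thm:mean}), not the weight $|x|^{-2}$ of Theorem \ref{thm:isometry}. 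The correct substitution for the $\U$ problem is $F(X)=|X|^{1-n}f(X/|X|^2)$. Second, $\pa_s^{(n-1)/2}\Rad F$ is \emph{not} a constant multiple of $(\U f)(\theta/(2s))$; the correct relation (with $m=(n-1)/2$) is $\pa_s^m(\Rad F)(\theta,s)=(-1)^m 2\pi^m s^{-m}(\U f)(\theta/(2s))$, and the factor $s^{-m}$ is precisely what converts $ds\,d\theta$ into $|x|^{-2}\,dx$ under $x=\theta/(2s)$; drop it and the left-hand weight comes out wrong as well.

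Both errors stem from the same missing idea: you cannot convert the $t$-derivatives in the Kirchhoff formula (taken at fixed $x$, transversally to the cone) into $s$-derivatives of $\Rad F$ by ``parameterising the cone,'' because varying $s$ at fixed $\theta$ corresponds under inversion to a different family of spheres (spheres through the origin with centers on the ray through $x$), not to the concentric spheres $|y-x|=t$. The paper's device is to write $(\M f)(x,t)$ with $\delta(t^2-|y-x|^2)$, apply $D^m=(\tfrac{1}{2t}\pa_t)^m$ to produce $\delta^m(t^2-|y-x|^2)$ \emph{before} restricting to $t=|x|$, and only then use $t^2-|y-x|^2=2x\cdot y-|y|^2$ together with the degree-$(-m-1)$ homogeneity of $\delta^m$; that homogeneity is what simultaneously generates the $s^{-m}$ factor and shifts the weight from $|y|^{2-2n}$ to $|y|^{1-n}$. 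You need this (or an equivalent computation) to land on the constants $2$ and $8$. A smaller point: for surjectivity you should invoke the $L^2$ range theorem for $h\mapsto\pa_s^{(n-1)/2}\Rad h$ (a bijection onto $L^2_\sigma(S^{n-1}\times\R)$, Theorem \ref{thm:radon}(c)) rather than the Schwartz-space moment conditions, and you must track the evenness $(\Rad F)(-\theta,-s)=(\Rad F)(\theta,s)$ so that the range matches that of the map $h\mapsto h(\theta/(2s))$.
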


We have inversion formulas for $\U$ and $\V$ closely connected to the inversion formula for the Radon transform.
\begin{theorem}[First inversion formula]\label{thm:first}
If $n \geq 3$ and $n$ is odd with $n=2m+1$ then for all $f,g \in \Ccd$ we have
\begin{align*}
f(x)& =\frac{1}{(4\pi)^m |x|^{2m} }\int_{S^{n-1}}\pa_s^m\left( \frac{1}{s^m}(\U f)(\frac{\theta}{2s})
\right)\bigg|_{s=x\cdot\theta/|x|^2}\; d\theta, \qquad
x \in \R^n, ~ x \neq 0,\\
g(x)& =\frac{-1}{(4\pi)^m |x|^{n+1}}\int_{S^{n-1}} \pa_s ^{m+1}
\left( \frac{1}{s^{m-1}|s|}(\V g)(\frac{\theta}{2s})\right)
\bigg|_{s=x\cdot\theta/|x|^2} \; d\theta,
\qquad
x \in \R^n, ~ x \neq 0.
\end{align*}
\end{theorem}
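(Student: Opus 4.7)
The plan is to invert $\U$ and $\V$ by reducing to the classical Radon inversion, using the two ingredients highlighted in the introduction: (i) the Kirchhoff-type representation of odd-dimensional wave solutions in terms of spherical means of the initial data, and (ii) the inversion map $x \mapsto x/|x|^2$, which sends spheres through the origin to hyperplanes.

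First I would use Kirchhoff's formula in $n = 2m+1$ dimensions to write $u(x,t)$ and $v(x,t)$ as iterated derivatives $(t^{-1}\pa_t)^{m-1}$ (with one extra $\pa_t$ in the $v$ case) applied to $t^{n-2}(\M f)(x,t)$ and $t^{n-2}(\M g)(x,t)$. Evaluating at $t = |x|$, the sphere $\{y : |y-x| = |x|\}$ passes through the origin, so $(\U f)(x)$ and $(\V g)(x)$ become an explicit differential operator in $t$, applied to the spherical mean of $f$ (respectively $g$) over spheres through the origin and evaluated at $t = |x|$.

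Next, I would parametrize the spheres through the origin by $x = \theta/(2s)$ with $\theta \in S^{n-1}$ and $s \in \R$; the inversion map $y \mapsto z = y/|y|^2$ then sends such a sphere bijectively onto the hyperplane $\{z : z\cdot\theta = s\}$. After computing the Jacobian, the spherical mean value of $f$ translates into the Radon transform $(\Rad F)(\theta,s)$ of a weighted function $F(z) := |z|^{-(n-1)} f(z/|z|^2)$ (and analogously $G$ for $g$). The classical odd-dimensional Radon inversion then gives $F(z) = c_n\!\int_{S^{n-1}} \pa_s^{n-1}(\Rad F)(\theta, z\cdot\theta)\,d\theta$, where $n-1 = 2m$. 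Because the Kirchhoff step already absorbed $m-1$ (respectively $m$) of these $2m$ derivatives into the relation between $f$ (respectively $g$) and $\U f$ (respectively $\V g$), only $m$ (respectively $m+1$) derivatives remain to be applied to the trace data. Setting $z = x/|x|^2$ and unwinding $F$ produces the Jacobian weights $|x|^{-2m}$ and $|x|^{-n-1}$ appearing in the statement.

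The main technical obstacle will be the derivative and weight bookkeeping: verifying that the remaining derivatives combine with the Jacobian factors from the inversion map and the $t^{n-2}$ weight in Kirchhoff's formula to collapse into the compact forms $\pa_s^m\bigl[s^{-m}(\U f)(\theta/(2s))\bigr]$ and $\pa_s^{m+1}\bigl[s^{1-m}|s|^{-1}(\V g)(\theta/(2s))\bigr]$, with the absolute value in the $\V$-case accounting for the distinction between the $s>0$ and $s<0$ branches of the parametrization. Once this algebraic identity is checked, applying Radon inversion to $F$ and $G$ at the point $z = x/|x|^2$ yields the theorem; the smoothness and decay needed to justify the Radon inversion are guaranteed by $f,g \in \Ccd$, which ensures that $F$ and $G$ are compactly supported away from the origin and hence Schwartz.
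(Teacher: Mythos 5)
Your proposal follows essentially the same route as the paper: represent $u,v$ via the odd-dimensional spherical-mean (Kirchhoff) formula, restrict to $t=|x|$ so the spheres pass through the origin, pull back under $y\mapsto y/|y|^2$ to identify $(\U f)(\theta/(2s))$ and $(\V g)(\theta/(2s))$ with $\pa_s^m\Rad F$ and $\pa_s^{m-1}\Rad G$ for the weighted functions $F,G\in\Ccd$, extend to $s<0$ by the evenness of the Radon transform (whence the $|s|$), and apply the classical Radon inversion with the remaining derivatives. One minor slip in your bookkeeping: the Kirchhoff step absorbs $m$ derivatives in the $\U$ case and $m-1$ in the $\V$ case (it is $u$, not $v$, that carries the extra $t$-derivative), not $m-1$ and $m$ as you wrote --- though your counts of the \emph{remaining} derivatives, $m$ and $m+1$, are correct and are what the final formulas require.
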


We derive a second set of inversion formulas, coming from the isometries and the
adjoints of $\U$ and $\V$ with respect to the associated inner products.
\begin{theorem}[Second inversion formula] \label{thm:second}
If $n \geq 3$ and $n$ is odd with $n=2m+1$ then we have
\begin{align*}
f(x) &= 2 \,\U^*(\U f)(x),
\qquad \forall x \in \R^n, x \neq 0, ~ f \in \Ccd
\end{align*}
where $\U^*$ is (below  $\phi_*(y)=|y|^{-1} \phi(y)$)
\[
(\U^* \phi)(x) = \frac{1 }{2 (-2\pi)^m |x|^{m-1}}  (\partial_s^m \Rad \phi_*)(x/|x|, |x|/2),
\qquad  x \neq 0, ~ \phi\in \Ccd
\]
or its continuous linear extension as an isometry from $L^2(\R^n, |x|^{-2})$ to itself. Also,
\begin{align*}
g(x) & = 8 \V^*( |x|^{-m} \pa_r (|x|^m (\V g)(x))(x),
\qquad  x \neq 0, ~ g \in \Ccd
\end{align*}
where $\V^*$ is (below $\phi^*(y) = |y| \phi(y)$)
\[
(\V^* \phi)(x) = \frac{1}{4  (-2 \pi)^m |x|^{m+1} } ( \pa_s^m \Rad \phi^*)(x/|x|, |x|/2),
 \, \qquad x \neq 0, ~ \phi \in \Ccd
\]
or its continuous linear extension as an isometry from $L^2(\R^n, |x|^2)$ to itself.
\end{theorem}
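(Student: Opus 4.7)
My strategy is to decouple Theorem \ref{thm:second} into (i) an abstract operator identity supplied by Theorem \ref{thm:isometry}, and (ii) verification that the displayed Radon-transform formulas are the correct Hilbert-space adjoints. For (i), set $T g := |x|^{-m}\pa_r(|x|^m \V g)$ with $m = (n-1)/2$. Theorem \ref{thm:isometry} says $\sqrt{2}\,\U$ and $\sqrt{8}\,T$ extend to unitary operators on $L^2(\R^n,|x|^{-2})$ and $L^2(\R^n,|x|^{2})$ respectively, so polarization immediately yields $2\U^*\U = I$ and $8 T^* T = I$ on those Hilbert spaces, with adjoints taken in the weighted inner products. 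Thus once the formulas for $\U^*\phi$ and $T^*\phi = \V^*\phi$ are shown to coincide with those Hilbert-space adjoints, the inversion identities $f = 2\U^*\U f$ and $g = 8\V^* Tg$ follow.

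To verify (ii), my plan is to compute $\langle \U f,\phi\rangle_{|x|^{-2}}$ directly using the Kirchhoff-type solution formula for odd $n=2m+1$, which represents $\U f(x) = u(x,|x|)$ as $m$ derivatives acting on a weighted spherical mean of $f$ over the sphere through the origin of center $x/2$ and radius $|x|/2$. The inversion $y \mapsto y/|y|^2$ sends this sphere to the hyperplane $y\cdot(x/|x|) = |x|/2$, converting the spherical mean into a Radon transform of a reweighted $\tilde f$, with the reweighting absorbing the inversion Jacobian $|y|^{-2n}$ together with the $|y|^{n-2}$ density from the spherical-mean formula. Substituting into the pairing, applying Fubini to swap the $x$-integral with the hyperplane integral, and shifting the $s$-derivatives onto $\phi$ via integration by parts, puts the pairing in the form $\int |y|^{-2} f(y)\,\overline{(\U^*\phi)(y)}\,dy$ with $\U^*\phi$ exactly as claimed: the factor $|y|^{-1}$ in $\phi_*$ is the residual Jacobian weight, and $\pa_s^m$ is inherited from the $m$ time derivatives in the Kirchhoff formula. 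The $\V$ case is analogous, with one structural twist: $v(x,|x|)$ carries one fewer $t$-derivative than $u(x,|x|)$, and that missing derivative is precisely what is restored by the outer $\pa_r$ in $Tg$; the weight change from $|x|^{-2}$ to $|x|^{2}$ in the inner product accounts for the shift from $|y|^{-1}\phi$ to $|y|\phi$ and from $|x|^{m-1}$ to $|x|^{m+1}$ in the prefactor.

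The continuous extensions of $\U^*$ and $\V^*$ then follow from density of $\Ccd$ in the weighted $L^2$-spaces combined with the norm identities of Theorem \ref{thm:isometry}. The main obstacle will be bookkeeping: tracking exponents of $|x|$ and $|y|$ through inversion, absorbing the sign picked up by integration by parts (which converts the $(4\pi)^m$ of Theorem \ref{thm:first} into the $(-2\pi)^m$ here, modulo the factor of $2$ from the isometry identity), and carrying the constant $\omega_{n-1}$ in the spherical-mean normalization through to match the Radon-transform normalization. A shortcut worth noting is to use Theorem \ref{thm:first} directly: the substitution $\theta \mapsto \theta/(2s)$ with $s=x\cdot\theta/|x|^2$ sends the sphere $S^{n-1}$ to the hyperplane $\{\eta : \eta\cdot(x/|x|) = |x|/2\}$, recasting the sphere integral of Theorem \ref{thm:first} as the hyperplane integral defining $(\Rad \phi_*)(x/|x|,|x|/2)$ and yielding the formula for $\U^*$ without recomputing the adjoint from scratch.
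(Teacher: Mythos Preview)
Your proposal is correct and follows essentially the same route as the paper: the paper likewise first computes the explicit adjoint formulas for $\U^*$ and $\V^*$ (its Propositions \ref{prop:adjointun} and \ref{prop:adjointvn}, starting from the $\delta^m$-representation of $\U f$ and $\V g$, applying Fubini, and using homogeneity of $\delta$ to recognize $\pa_s^m\Rad\phi_*$ and $\pa_s^m\Rad\phi^*$), and then combines these with the polarized isometry of Theorem \ref{thm:isometry} exactly as in your step (i) to conclude $2\U^*\U = I$ and $8\V^*T = I$. Your final shortcut, reading off $\U^*$ by changing variables in the integral of Theorem \ref{thm:first}, is an alternative the paper does not pursue.
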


The inversion of $\U$ and $\V$ has a connection with the problem of recovering a function from its spherical averages over all 
spheres through the origin, that is, given $(\M f)(x,|x|)$ for all $x \in \R^n$ - recover $f$. The spherical average problem 
can be tackled by the same methods
as those applied to the $\U, \V$ problem, except the spherical average problem is a little easier but the results are more
cumbersome than those for the $\U, \V$ problem. 
When $n=3$, the spherical average problem and the $\V$ problem are equivalent; for 
other $n$ the connection is complicated and there is no simple path to obtain results for one problem from the other.
This spherical average inversion 
problem has received a fair amount of attention but the results are 
incomplete (see subsection \ref{subsec:history}). 
For the spherical average problem, we give the results only for the odd $n$ 
case but our technique works also for the even $n$ case - the results for the even $n$ case are not as appealing as 
the 
odd $n$ case.

%
%%%%%%%%%%%%%%%%%%%%%%%
\begin{theorem}[Isometry and inversion for spherical means] \label{thm:mean}
Suppose $n$ is odd and $h(x)\in \Ccd$. We have the isometry
\[
\int_{\R^n} |x|^{2n-4} \, |h(x)|^2 \, dx =
\frac{2 \pi}{\Gamma(n/2)^2}  \int_{\R^n}  | (\rho^2 \pa_\rho)^{(n-1)/2} ( \rho^{n-1} (\M h)(y, |y|) ) |^2  \, |y|^{-n-1} \, dy
\]
and the inversion formula
\[
h(x) = \frac{ (-1)^{(n-1)/2} \omega_{n-1}}{ 2\, \pi^{n-1}} |x|^{3-2n} \,
\int_{2 y \cdot x = |x|^2}  |y|^{-n} \, ( \rho^2 \partial_ \rho)^{n-1} ( \rho^{n-1}  (\M h)( y, |y|) ) \, dS_y
\]
where $\rho = |y|$.
\end{theorem}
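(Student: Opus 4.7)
The strategy follows the program foreshadowed in the introduction: use the inversion $y = x/|x|^2$ to convert the problem into one about the Radon transform of an auxiliary function, and then invoke the classical odd-dimensional $L^2$ isometry and inversion formula for $\Rad$. Set $\tilde h(y) := |y|^{2-2n} h(y/|y|^2)$, which lies in $\Ccd$ whenever $h$ does. The inversion sends the sphere $|x-c|=|c|$ (through the origin with center $c\ne 0$) to the hyperplane $y\cdot(c/|c|) = 1/(2|c|)$, and the induced surface measures are related by $dS_x = |y|^{-2(n-1)}\,dS_y$. The exponent $2-2n$ in $\tilde h$ is chosen precisely to cancel this factor, giving, for $s>0$,
\[
(\Rad \tilde h)(\theta,s) = \omega_{n-1}(2s)^{-(n-1)}(\M h)\bigl(\tfrac{\theta}{2s},\tfrac{1}{2s}\bigr).
\]
Substituting $\rho = 1/(2s)$ (so $\pa_s = -2\rho^2\pa_\rho$) and writing $y = \rho\theta$,
\[
\pa_s^k (\Rad \tilde h)(\theta,s) = \omega_{n-1}(-2)^k (\rho^2\pa_\rho)^k\bigl(\rho^{n-1}(\M h)(y,|y|)\bigr), \qquad k\ge 0.
\]

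\textbf{Isometry.} Apply the odd-dimensional Radon Plancherel identity
\[
\int_{\R^n}|\tilde h|^2\,dy = \frac{1}{2(2\pi)^{n-1}}\int_{S^{n-1}}\int_\R |\pa_s^{(n-1)/2}(\Rad \tilde h)(\theta,s)|^2\,ds\,d\theta,
\]
a direct consequence of the Fourier slice theorem and Parseval. The change of variables $x = y/|y|^2$ (Jacobian $|x|^{-2n}$) converts the left side into $\int_{\R^n} |x|^{2n-4}|h(x)|^2\,dx$. On the right, use $\Rad\tilde h(-\theta,-s)=\Rad\tilde h(\theta,s)$ to double the $s>0$ contribution, substitute $|ds| = \tfrac12\rho^{-2}d\rho$, and combine $(\rho,\theta)$ into $y \in \R^n$ via $d\rho\,d\theta = |y|^{1-n}dy$. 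Collecting constants with $\omega_{n-1}^2 = 4\pi^n/\Gamma(n/2)^2$ produces the factor $2\pi/\Gamma(n/2)^2$ in the stated isometry.

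\textbf{Inversion.} Apply the odd-dimensional Radon inversion
\[
\tilde h(p) = \frac{(-1)^{(n-1)/2}}{2(2\pi)^{n-1}}\int_{S^{n-1}}\pa_s^{n-1}(\Rad \tilde h)(\theta,\theta\cdot p)\,d\theta
\]
at $p = x/|x|^2$, and recover $h(x) = |x|^{2n-2}\tilde h(x/|x|^2)$. The argument $s = \theta\cdot x/|x|^2$ corresponds to $\rho = |x|^2/(2\theta\cdot x)$, so $y:=\rho\theta$ lies on the hyperplane $2y\cdot x = |x|^2$. Restrict to the half-sphere $\theta\cdot x>0$ (doubling: the integrand $(\rho^2\pa_\rho)^{n-1}(\rho^{n-1}\M h(\rho\theta,\rho))$ is invariant under $(\theta,\rho)\to(-\theta,-\rho)$, since $n-1$ is even and $\M h(c,t)=\M h(c,-t)$) and compute, from the parametrization $y = \rho\theta$ with $\rho = |x|/(2\cos\phi)$ where $\cos\phi = \theta\cdot x/|x|$, the Jacobian $d\theta = (|x|/(2|y|^n))\,dS_y$. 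Absorbing constants yields the stated inversion formula with prefactor $(-1)^{(n-1)/2}\omega_{n-1}/(2\pi^{n-1})$.

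\textbf{Main obstacle.} Conceptually the proof is merely a translation between $\M$ and $\Rad$ via inversion, and no delicate analysis is required: because $\tilde h\in\Ccd$, $\Rad\tilde h$ is smooth with support bounded away from $s=0$, which justifies every differentiation and change of variables. The real work is careful bookkeeping of the factors $(2s)^{-(n-1)}$, $(-2)^k$, $(2\pi)^{n-1}$, $\omega_{n-1}$, the factor $2$ from the Radon symmetry, and the half-sphere-to-hyperplane Jacobian $|x|/(2|y|^n)$, together with the parity check that legitimizes restricting to the half-sphere.
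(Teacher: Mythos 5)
Your proposal is correct and follows essentially the same route as the paper: the same auxiliary function $\tilde h(y)=|y|^{2-2n}h(y/|y|^2)$, the same identity $(\Rad \tilde h)(\theta,1/(2\rho))=\omega_{n-1}\rho^{n-1}(\M h)(\rho\theta,\rho)$, the odd-dimensional Radon Plancherel and inversion formulas, the evenness-doubling to a half-sphere, and the half-sphere-to-hyperplane change of variables with Jacobian $|x|/(2|y|^{n})$. One transcription slip: inverting $\tilde h(p)=|p|^{2-2n}h(p/|p|^2)$ at $p=x/|x|^2$ gives $h(x)=|x|^{2-2n}\tilde h(x/|x|^2)$, not $|x|^{2n-2}\tilde h(x/|x|^2)$; with the correct exponent your bookkeeping lands on the stated $|x|^{3-2n}$ prefactor.
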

Note the integration on the RHS of the inversion formula - the integral is over ``all spheres which pass through $x$ and the origin''.

% !TEX root = driver.tex

\section{Preliminaries}
We introduce notation and state and prove preliminary results needed in the proofs of the theorems.
We define $\S(S^{n-1} \times \R)$ to consist of
functions in $\S(\R^n \times \R)$ restricted to $S^{n-1} \times \R$ and
\begin{align*}
\S_e(S^{n-1} \times \R) &=  \{  h \in \S(S^{n-1} \times \R) \, : \, h(-\theta, -s) =  h(\theta,s), ~ \forall
(\theta,s) \in S^{n-1} \times \R \},
\\
L^2_e(S^{n-1} \times \R) & = \{  h \in L^2(S^{n-1} \times \R) \, : \, h(-\theta, -s) =  h(\theta,s), ~ \forall
(\theta,s) \in S^{n-1} \times \R \},
\\
L^2_\sigma(S^{n-1} \times \R) & = \{  h \in L^2(S^{n-1} \times \R) \, : \, h(-\theta, -s) = (-1)^{(n-1)/2} h(\theta,s), ~ \forall
(\theta,s) \in S^{n-1} \times \R \}.
\end{align*}
We also define the operator $D = \frac{1}{2t} \pa_t$, which acts like differentiation w.r.t $t^2$ because, for any differentiable 
function $h(t)$, we have
\[
D( h(t^2) ) = h'(t^2), \qquad \forall t \in \R, ~ t \neq 0.
\]

Since it is easier to manipulate integrals on $\R^n$ than surface integrals, sometimes we convert surface integrals to integrals on
$\R^n$ using the distributional relation
\beqn
\int_{\phi=0} \frac{f(x)}{| (\nabla \phi)(x)|}  \, dS_x = \int_{\R^n} f(x) \, \delta( \phi(x) ) \, dx
\label{eq:surfdelta}
\eeqn
for any $f \in C_c^\infty(\R^n)$ and any $\phi \in C^\infty(\R^n)$ with $(\nabla \phi)(x) \neq 0$ if $\phi(x)=0$.

%%%%%%%%%%%%%%%%%%%%%
The inversion map on $\R^n$ plays an important role in the proofs because inversion maps spheres through the origin to hyperplanes.
\begin{prop}[Properties of inversion]\label{prop:reflection}
The inversion map $x \to X = x/|x|^2$ on $\R^n  \setminus \{0\}$ has the following properties:
\vspace{-0.3in}
\begin{enumerate}
\item[(a)] For any $c \in \R^n$, $c \neq 0$, the sphere $|x-c|=|c|$ is mapped to the hyperplane $ 2 X \cdot c = 1$;
\item[(b)] $dX = |x|^{-2n} \, dx = |X|^{2n} dx$;
\item[(c)] The map $ h(x) \to H(X) = |X|^k h(X/|X|^2)$, extended by zero, is a linear bijection from 
$\Ccd$ to itself, for every integer $k$.
\end{enumerate}
\end{prop}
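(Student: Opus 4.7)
The plan is to handle the three claims independently, since each reduces to a short algebraic computation resting on the involution property of inversion $x \mapsto X = x/|x|^2$ on $\R^n \setminus \{0\}$ and the identity $|X| = 1/|x|$. For part (a), I would start from $|x-c|^2 = |c|^2$, expand to obtain $|x|^2 = 2 x \cdot c$, and then, on the punctured sphere, substitute $x = X/|X|^2$ together with $|x|^2 = 1/|X|^2$. The equation becomes $1/|X|^2 = 2(X \cdot c)/|X|^2$, i.e.\ $2 X \cdot c = 1$. Since inversion is bijective on $\R^n \setminus \{0\}$, the punctured sphere corresponds bijectively to the hyperplane, which is the claim.

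For part (b), I would compute the Jacobian entries $\pa X_i / \pa x_j = |x|^{-2}\delta_{ij} - 2 |x|^{-4} x_i x_j = |x|^{-2}(\delta_{ij} - 2 \hat{x}_i \hat{x}_j)$, where $\hat x = x/|x|$. The matrix $I - 2 \hat x \hat x^T$ is the Householder reflection through the hyperplane perpendicular to $\hat x$, and so has determinant $-1$. Therefore $|\det(\pa X/\pa x)| = |x|^{-2n}$, giving $dX = |x|^{-2n}\, dx$, and the second equality follows from $|X| = 1/|x|$.

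For part (c), I would set $T_k h(X) := |X|^k h(X/|X|^2)$ for $h \in \Ccd$ and first verify $T_k h \in \Ccd$. If $\mathrm{supp}\, h \subset \{r_1 \le |x| \le r_2\}$ with $r_1 > 0$, then $T_k h$ is supported in the annulus $\{1/r_2 \le |X| \le 1/r_1\}$, which is compact and avoids the origin; smoothness on this annulus is inherited from $h$ via the smoothness of $X \mapsto X/|X|^2$ away from $0$, and extension by zero then produces an element of $\Ccd$. Linearity is clear. For bijectivity I would show that $T_k$ is an involution: using $|Y/|Y|^2| = 1/|Y|$ and the fact that applying inversion twice returns $Y$, one computes $T_k(T_k h)(Y) = |Y|^k \cdot |Y|^{-k} h(Y) = h(Y)$, so $T_k^2 = \mathrm{Id}$ and $T_k$ is its own inverse. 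The only care needed is in bookkeeping the factors of $|x|$ through the double application of inversion; there is no genuine obstacle.
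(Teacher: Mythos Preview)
Your proposal is correct. Parts (a) and (c) match the paper's approach (indeed, the paper dismisses (c) as immediate from the definition, whereas you spell out the involution $T_k^2=\mathrm{Id}$ explicitly). The only genuine difference is in (b): the paper passes to spherical coordinates $(\rho,\theta)$, uses $\rho_X=\rho_x^{-1}$ so that $d\rho_X=\rho_x^{-2}\,d\rho_x$, and reads off $dX=\rho_X^{n-1}\,d\rho_X\,d\theta=|x|^{-2n}\,dx$ directly. Your route via the full Jacobian $\partial X_i/\partial x_j=|x|^{-2}(\delta_{ij}-2\hat x_i\hat x_j)$ and the Householder-reflection determinant is equally short and has the mild advantage of being coordinate-free, while the spherical-coordinate computation makes the radial nature of the map more transparent; either way the content is the same.
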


\begin{proof}
We have $x = X/|X|^2$ and hence 
\[
|x-c|^2 - |c|^2 = |x|^2 - 2 x \cdot c = \frac{1}{|X|^2} - 2 \frac{X \cdot c}{|X|^2} = \frac{1 - 2 X \cdot c}{|X|^2}
\]
which proves (a).
Using spherical coordinates $(\rho_x=|x|, \theta = x/|x|)$ for $x$ and $(\rho_X=|X|, \theta= X/|X|)$ for $X$ we have 
$\rho_X = \rho_x^{-1}$ so $ d \rho_X / d \rho_x = - \rho_x^{-2} $ and hence
\[
dX = \rho_X^{n-1} d \rho_X \, d \theta = \rho_x^{1-n} \, | -\rho_x^{-2}| \, d \rho_x \, d \theta = \rho_x^{-n-1} d \rho_x \, d \theta
= \rho_x^{-2n} dx = |x|^{-2n} dx
\]
proving (b).
(c) follows easily from the definition of $\Ccd$.
\end{proof}

For use later, we recall the standard properties of the Radon transform in odd dimensions, found in 
Chapter 1 of \cite{GGV} and Chapter 1 of \cite{helgason}.
\begin{theorem}[Properties of the Radon transform]\label{thm:radon}
For $n \geq 3$, $n$ odd, the Radon transform $\Rad$ is an injective linear map from
 $\S(\R^n)$ to $\S_e( S^{n-1} \times \R)$ and has the following properties:
\vspace{-0.3in}
\begin{enumerate}
\item[(a)] (Equation (1') on page 12 in \cite{GGV}) $\Rad$ is an isometry with
\[
\int_{\R^n} |h(x)|^2 \, dx = \frac{1}{2 (2 \pi)^{n-1}} \int_{S^{n-1}} \int_\R \left | \pa_s^{(n-1)/2} ( \Rad h)(\theta,s)
\right |^2 \, ds \, d \theta ,
\qquad \forall h \in \S(\R^n);
\]
\item[(b)] (Theorem 3.6 in Chapter 1 of \cite{helgason}) For any $h \in \S(\R^n)$ we have
\[
h(x) = \frac{ (-1)^{(n-1)/2} }{ 2 (2 \pi)^{n-1} } \int_{S^{n-1}} \pa_s^{n-1} ( \Rad h)(\theta, s)|_{s = x \cdot \theta} \, d \theta,
\qquad \forall x \in \R^n;
\]
\item[(c)] (Remark on page 14/15 in \cite{GGV} and Theorem 4.1 on page 21 in \cite{helgason}) The isometry 
$h \to  \pa_s^{(n-1)/2} \Rad h$, from 
$\S(\R^n)$ to $\S(S^{n-1} \times \R)$, has a continuous linear extension as a {\bf bijection} from $L^2(\R^n)$ to $L^2_\sigma(S^{n-1} \times \R)$. 
\end{enumerate}
\end{theorem}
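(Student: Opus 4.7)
The plan is to derive (a)--(c), together with the injectivity and the evenness assertions in the opening sentence, from three classical ingredients: the Fourier Slice Theorem, the one-dimensional Plancherel identity, and the $n$-dimensional Fourier inversion formula. This is the route followed in the cited references of Gelfand--Graev--Vilenkin and Helgason, and I would organize it in four steps.

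First I would prove the Fourier Slice identity
\[
\widetilde{\Rad h}(\theta,\sigma) = \hat h(\sigma\theta), \qquad \theta\in S^{n-1}, ~\sigma\in\R,
\]
where the tilde denotes the one-dimensional Fourier transform in $s$ and the hat is the $n$-dimensional Fourier transform. This is immediate from the delta-function representation of $\Rad h$ by integrating in $s$ first. Evenness $\Rad h(-\theta,-s)=\Rad h(\theta,s)$ (hence $\Rad : \S(\R^n)\to \S_e(S^{n-1}\times\R)$) follows from the invariance of $\delta(x\cdot\theta-s)$ under $(\theta,s)\mapsto(-\theta,-s)$, and injectivity is immediate from Fourier Slice and injectivity of the Fourier transform on $\S(\R^n)$.

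For (a), I would apply one-dimensional Plancherel to $\partial_s^{(n-1)/2}\Rad h(\theta,\cdot)$, which produces the Fourier-side integrand $\sigma^{n-1}|\hat h(\sigma\theta)|^2$; here I am using that $n$ is odd so $(n-1)/2$ is an integer and $|\sigma|^{n-1}=\sigma^{n-1}$ (so no awkward Hilbert-transform tail appears). Integrating over $S^{n-1}\times\R$ and passing to polar coordinates on $\R^n$, together with the factor $2$ coming from the double cover $(\theta,\sigma)\leftrightarrow(-\theta,-\sigma)$ and the $n$-dimensional Plancherel identity, yields the stated constant $1/(2(2\pi)^{n-1})$. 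For (b), I would start from $h(x)=(2\pi)^{-n}\int \hat h(\xi)e^{ix\cdot\xi}\,d\xi$, pass to polar coordinates, replace $\hat h(\sigma\theta)$ by $\widetilde{\Rad h}(\theta,\sigma)$ via Fourier Slice, symmetrize the $\sigma$-integral to all of $\R$ (again using parity), and then rewrite $\sigma^{n-1}\widetilde{\Rad h}(\theta,\sigma)=(-1)^{(n-1)/2}\widetilde{\partial_s^{n-1}\Rad h}(\theta,\sigma)$; the sign $(-i)^{n-1}=(-1)^{(n-1)/2}$ is precisely where the odd-dimensional hypothesis is used. Performing the remaining inner one-dimensional Fourier inversion and evaluating at $s=x\cdot\theta$ produces the stated pointwise formula.

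For (c), continuous extension of $h\mapsto \partial_s^{(n-1)/2}\Rad h$ from $\S(\R^n)$ to all of $L^2(\R^n)$ is automatic from the isometry in (a) and the density of $\S(\R^n)$ in $L^2(\R^n)$. The image lies in $L^2_\sigma(S^{n-1}\times\R)$ because differentiating the evenness identity $\Rad h(-\theta,-s)=\Rad h(\theta,s)$ in $s$ exactly $(n-1)/2$ times introduces the sign $(-1)^{(n-1)/2}$. For surjectivity, given $G\in L^2_\sigma$, I would invert Fourier Slice by prescribing $\hat h(\sigma\theta):=(i\sigma)^{-(n-1)/2}\widetilde G(\theta,\sigma)$. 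The parity condition on $G$ is exactly what makes this definition consistent under $(\theta,\sigma)\mapsto(-\theta,-\sigma)$, and repeating the Plancherel/polar-coordinate computation from (a) shows that the resulting $\hat h$ lies in $L^2(\R^n)$ with $\|\hat h\|_{L^2(\R^n)}^2$ proportional to $\|G\|_{L^2(S^{n-1}\times\R)}^2$. The main technical obstacle is the apparent singularity of $(i\sigma)^{-(n-1)/2}$ at $\sigma=0$ and the verification of measurability of $\hat h$ on $\R^n$ as $\theta$ varies: this is handled by organizing the norm computation symmetrically on $S^{n-1}\times\R$ so the weight $\sigma^{n-1}$ from the polar Jacobian cancels $|\sigma|^{-(n-1)}$ from the definition, after which the surjectivity is a standard isometry-plus-density argument.
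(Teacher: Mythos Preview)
The paper does not supply a proof of this theorem at all: it is stated in the Preliminaries with explicit page and theorem citations to Gel'fand--Graev--Vilenkin and Helgason for each of (a), (b), (c), and the paper then simply uses it as a black box. Your sketch via the Fourier Slice identity, one-dimensional Plancherel, and polar coordinates is exactly the classical argument found in those references, and it is correct, so there is nothing to compare.
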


% !TEX root = driver.tex

 \section{The inversion of $\mathcal{U}$}
 
 In this section we provide the proofs of the parts of Theorems \ref{thm:isometry} - \ref{thm:second} pertaining to $\U$. Below
 $n=2m+1$ with $m \geq 1$.

%%%%%%%%%%%%%%%%%%%%
\subsection{Proof of Theorem \ref{thm:isometry} for $\U$}

If $f \in \Ccd$ then, from the standard theory, the IVP (\ref{eq:ude}), (\ref{eq:uic})
has a unique solution $u(x,t) \in C^\infty(\R^n \times \R)$. In addition, since 
$f=0$ in a neighborhood of $0$, we have $u(x,t)=0$ in neighborhood of $(0,0)$ implying $\U f \in \dot{C}^\infty(\R^n)$.

If $f \in \Ccd$ then (see page 682 of \cite{CHII})
\[
u(x,t)= \frac{\sqrt{\pi}}{\Gamma(\frac{n}{2})} t D^{(n-1)/{2}} \left(t^{n-2}(\mathcal{M}f)(x,|t|)\right), \quad \forall 
(x,t)\in \R^n\times \R, ~ t \neq 0.
\]
When $t>0$, noting that $\omega_{n-1} = \frac{2 \pi^{n/2}}{\Gamma(n/2)}$ and using (\ref{eq:Mhd}), we have
\begin{align*}
u(x,t)
&= \frac{t}{\pi^m} D^m \left( \int_{\R^n} f(y)  \, \delta(t^2 - |y-x|^2)\; dy  \right) \\
&= \frac{t}{\pi^{m}}\int_{\R ^n} f(y) \, \delta^{m}(t^2 - |y-x|^2)\; dy
\end{align*}
so 
\begin{align}
(\U f)(x)=\frac{|x|}{\pi^m}\int_{\R^n} f(y) \, \delta^{m}(2x\cdot y - |y|^2)\; dy, 
\qquad \forall x\in \R^n, \; x \neq 0.
\label{eq:ufxn}
\end{align}
For $x\in \mathbb{R}^n$, $x\not = 0$,  let $ \theta=x/|x|, s=\frac{1}{2|x|}$ and define
\begin{equation*}
F(X)=f(X/|X|^2) \,|X|^{1-n};
\end{equation*}
then by Proposition \ref{prop:reflection}, $F(X)\in \Ccd)$. Using the homogeneity of $\delta^k(t)$ we have
\begin{align*}
2^{m+1}\pi^m |x|^m ( \U f)(x)&=\int_{ \R^n} f(y) \, |2x|^{m+1} \,\delta^{m}(2 x \cdot y - |y|^2)\; dy
\\
&= \int_{\R^n}  \frac{f(y)}{|y|^{2m+2}} \, \delta^{m} \left ( \frac{x }{|x|}\cdot \frac{y}{|y|^2} - \frac{1}{2|x|} \right ) \; dy
\\
&= \int_{\R^n} \frac{f(y)}{|y|^{2m+2}} \, \delta^{m} \left ( \theta\cdot \frac{y}{|y|^2} - s \right )\; dy
\tag*{let $Y=y/|y|^2$}
\\
&= \int_{\R^n} F(Y) \, \delta^{m}(\theta\cdot Y- s)\; dY
\\
&= (-1)^m \partial_s^{m}(\Rad F)(\theta,s),
\end{align*}
that is
\begin{equation}
 \partial_s^{m} (\mathcal{R}F)(\theta,s)
 =\frac{(-1)^m 2 \pi ^m}{s^m}(\mathcal{U}f)(\frac{\theta}{2s}), \quad \forall \theta\in S^{n-1}, \; s>0.
 \label{eq:RFUf}
\end{equation}

Since $F \in \Ccd$ and $(\Rad F)(-\theta, -s) = (\Rad F)(\theta, s)$, from (\ref{eq:RFUf}) and Theorem \ref{thm:radon}
 we have
\begin{align*}
\int_{\R^n}|F(X)|^2\; dX
& =\frac{1}{2(2\pi)^{n-1}}\int_{S^{n-1}}\int_{-\infty}^\infty |\partial_s^{m}(\Rad F)(\theta, s)|^2 \; ds  \; d\theta
\\
&=\frac{1}{(2 \pi)^{n-1}}\int_{S^{n-1}}\int_{0}^\infty |\partial_s^{m}(\mathcal{R}F)(\theta, s)|^2 \; ds \; d\theta 
\\
&=\frac{1}{2^{n-3}}\int_{S^{n-1}}\int_{0}^\infty s^{1-n} |(\U f)(\theta/(2s))|^2 \; ds \; d\theta 
\tag*{let $r=\frac{1}{2s}$}
\\
&=2\int_{S^{n-1}}\int_{0}^\infty r^{n-3}|u(x,|x|)|^2 \; dr \; d\theta
\\
&=2\int_{\mathbb{R}^n}\left(\frac{u(x,|x|)}{|x|}\right)^2\; dx.
\end{align*}
Now, from Proposition \ref{prop:reflection}
\begin{equation*}
\int_{\mathbb{R}^n}|F(X)|^2\; dX=\int_{\mathbb{R}^n}(f(x)|x|^{n-1})^2\; |x|^{-2n} \, dx=\int_{\mathbb{R}^n}\left(\frac{f(x)}{|x|}\right)^2\; dx,
\end{equation*}
so the proof of the isometry is complete.

From Theorem \ref{thm:radon} and that $\Ccd$ is dense in $L^2(\R^n)$ the map 
\[
 F \to s^m \pa_s^m (\Rad F)(\theta,s)
\]
from
$\Ccd$ to $\S_e(S^{n-1} \times \R)$ has an extension as a continuous linear bijection from $L^2(\R^n)$ to
$L^2_e(S^{n-1} \times \R, s^{-2m})$. So from proposition \ref{prop:reflection}, the map 
\[
f(x) \to F(X) \to s^m \pa_s^m (\Rad F)(\theta,s)
\]
 from $\Ccd$ to $\S_e(S^{n-1} \times \R)$ has an
extension as a continuous linear bijection from $L^2(\R^n, |x|^{-2})$ to $L_e^2(S^{n-1} \times \R, s^{-2m})$.

Given $h : \R^n \to \R$, define $\tilde{h} : S^{n-1} \times (\R \setminus \{0\}) \to \R$ with
$\tilde{h}(\theta,s) = h(\theta/(2s))$. Since
\begin{align*}
\int_{S^{n-1}} \int_\R | \tilde{h}(\theta,s)|^2 \, s^{-2m} \, ds \, d \theta
& = \int_{S^{n-1}} \int_0^\infty | h(\theta/(2s))|^2 \, s^{-2m} \, ds \, d \theta
+ \int_{S^{n-1}} \int_{-\infty}^0 | h(\theta/(2s))|^2 \, s^{-2m} \, ds \, d \theta
\\
&= \frac{1}{2} \int_{S^{n-1}} \int_0^\infty | h(r \theta)|^2 \, \frac{r^{2m}}{r^2}  \, dr \, d \theta
+ \frac{1}{2} \int_{S^{n-1}} \int_0^\infty | h(r \theta)|^2 \, \frac{r^{2m}}{r^2}  \, dr \, d \theta
\\
&= \int_{\R^n} \ \frac{|h(x)|^2}{|x|^2} \, dx,
\end{align*}
the map $ h \to \tilde{h}$ is a continuous linear bijection from $L^2(\R^n, |x|^{-2})$ to 
$L^2_e(S^{n-1} \times \R, s^{-2m})$.

From (\ref{eq:RFUf}) we have
\[
s^m  \partial_s^{m} (\mathcal{R}F)(\theta,s)
 =c \, (\mathcal{U}f)(r \theta )|_{r=1/(2s)}, \quad \forall \theta\in S^{n-1}, \; s>0,
\]
so using the results in the previous two paragraphs,  we conclude that the map $f \to \U f$ has an extension 
as a bijection from $L^2(\R^n, |x|^{-2})$ to itself.

 %%%%%%%%%%%%%%%%%%%%%%%%%%%%%%%%%%%%
 %%%%%%%%%%%%%%%%%%%%%%%%%%%%%%%%%%%
 \subsection{Proof of Theorem \ref{thm:first} for $\U$}
 
 If $f \in \Ccd$ and we define $F(X) = f(X/|X|^2) \, |X|^{1-n}$ then $F \in \Ccd$ and from (\ref{eq:RFUf}) we have
 \[
 \partial_s^{m} (\Rad F)(\theta,s)
 =\frac{(-1)^m 2 \pi ^m}{s^m}(\U f)(\frac{\theta}{2s}), \quad \forall \theta\in S^{n-1}, \; s>0.
 \]
When $s<0$, substituting $(-\theta,-s)$ into the above identity and noting $(\Rad F)(-\theta, -s) = (\Rad F)(\theta, s)$, we obtain
\[
 \partial_s^{m} (\Rad F)(\theta,s)=\frac{(-1)^m 2\pi^m}{s^m}(\U f)(\frac{\theta}{2s}), \quad \forall \theta\in S^{n-1}, \ s< 0.
\]
Hence
\[
 \partial_s^{m} (\Rad F)(\theta,s)=\frac{(-1)^m 2\pi^m}{s^m}(\U f)(\frac{\theta}{2s}), 
 \quad \forall \theta\in S^{n-1}, \ s\in \R, ~ s \neq 0,
\]
so
\begin{equation*}
 \partial_s^{2m} (\Rad F)(\theta,s)=(-1)^m 2\pi^m\partial_s^m\left( \frac{1}{s^m}(\U f)(\frac{\theta}{2s})\right), \quad \forall \theta\in S^{n-1}, \ s\in \R, \, s \neq 0
\end{equation*}
Hence, using the Radon transform inversion formula (see Theorem \ref{thm:radon}), for $X \neq 0$, we have
\begin{align*}
F(X) 
& =\frac{(-1)^m}{2 (2 \pi)^{2m}} \int_{S^{n-1}} \partial_s^{2m} (\Rad F)(\theta,s)|_{s=X\cdot\theta}\; d\theta
\\
& = \frac{1}{(4\pi)^m}\int_{S^{n-1}}\partial_s^m\left( \frac{1}{s^m}(\mathcal{U}f) 
(\frac{\theta}{2s})\right)\bigg|_{s=X\cdot\theta}\; d\theta,
\end{align*}
hence
\begin{align*}
f(x) & =\frac{1}{(4\pi)^m |x|^{n-1} }\int_{S^{n-1}}\partial_s^m\left( \frac{1}{s^m}(\U f)(\frac{\theta}{2s})
\right)\bigg|_{s=x\cdot\theta/|x|^2}\; d\theta,
\qquad \forall x \neq 0,
\end{align*}
proving the theorem.

%%%%%%%%%%%%%%%%%%%%%%%%%%%%%%%%%%%%%%%%%%%%%
%%%%%%%%%%%%%%%%%%%%%%%%%%%%%%%%%%%%%%%%%

\subsection{Proof of Theorem \ref{thm:second} for $\U$}

We first find the adjoint of $\U$ in the weighted $L^2$ norm.
%%%%%%%
\begin{prop}[\bf The adjoint of $\U$]\label{prop:adjointun}
If $n$ is odd then for any $f, \phi \in \Ccd$ we have 
\[
\int_{\R^n} \frac{ (\U f)(x) \, \phi(x)}{|x|^2} \, dx = \int_{\R^n} \frac{ f(x) \, (\U^* \phi)(x)}{|x|^2} \, dx
\]
where
\begin{equation*}
(\U^* \phi)(x)= \frac{1 }{2 (-2\pi)^m |x|^{m-1}}  (\partial_s^m \Rad \phi_*)(x/|x|, |x|/2),
\quad \forall \phi\in \Ccd,  ~~ x \in \R^n,~ x \neq 0,
\end{equation*}
and $\phi_*(y)=\phi(y) \,|y|^{-1}$. Further
\[
\int_{\R^n} \frac{ | \phi(x)|^2}{|x|^2} \, dx = 2 \int_{\R^n} \frac{ |(\U^* \phi)(x)|^2}{|x|^2} \, dx, \qquad 
\forall \phi \in \Ccd
\]
and the map $\U^* : \Ccd \to L^2(\R^n, |x|^{-2})$ has a continuous linear extension as a {\bf bijection} from
$L^2(\R^n, |x|^{-2})$ to itself.
\end{prop}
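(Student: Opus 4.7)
The strategy is a direct Fubini-type manipulation using the explicit representation (\ref{eq:ufxn}) of $\U f$. Writing $\phi_*(x)=\phi(x)/|x|$ and substituting (\ref{eq:ufxn}),
\[
\int_{\R^n}\frac{(\U f)(x)\,\phi(x)}{|x|^2}\,dx
=\frac{1}{\pi^m}\int_{\R^n} f(y)\left[\int_{\R^n}\phi_*(x)\,\delta^m(2x\cdot y-|y|^2)\,dx\right]dy,
\]
so the task reduces to rewriting the bracketed inner integral as a partial derivative of a Radon transform. The Fubini exchange is harmless since $f,\phi\in\Ccd$, so the test function $f(y)\phi(x)/|x|$ is smooth and compactly supported in $\R^n\times\R^n$ and pairs cleanly with the distribution $\delta^m(2x\cdot y-|y|^2)$.

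For fixed $y\neq 0$, set $\theta=y/|y|$ and $s=|y|/2$; then $2x\cdot y-|y|^2=2|y|(x\cdot\theta-s)$. The homogeneity rule $\delta^m(au)=a^{-m-1}\delta^m(u)$ with $a=2|y|>0$, combined with the identity
\[
\int_{\R^n}\phi_*(x)\,\delta^m(x\cdot\theta-s)\,dx=(-1)^m\,\pa_s^m(\Rad\phi_*)(\theta,s),
\]
collapses the bracketed integral to $(2|y|)^{-m-1}(-1)^m\,\pa_s^m(\Rad\phi_*)(y/|y|,|y|/2)$. Inserting this into the outer integral and simplifying the constants (using $n=2m+1$, so that $|y|^2/(2|y|)^{m+1}=2^{-m-1}|y|^{1-m}$, and $(-1)^m/(2\pi)^m=(-2\pi)^{-m}$) produces the formula claimed for $\U^*\phi$ together with the adjoint identity $\int(\U f)\phi/|x|^2=\int f(\U^*\phi)/|x|^2$ for all $f,\phi\in\Ccd$.

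The remaining isometry and bijectivity statements follow at once from Theorem \ref{thm:isometry}: it provides a continuous linear bijection $\U:L^2(\R^n,|x|^{-2})\to L^2(\R^n,|x|^{-2})$ with $\|\U f\|^2=\tfrac12\|f\|^2$, so $\sqrt{2}\,\U$ is a unitary operator on the Hilbert space $L^2(\R^n,|x|^{-2})$. Its Hilbert-space adjoint $\sqrt{2}\,\U^*$ is therefore also unitary, and since $\Ccd$ is dense in $L^2(\R^n,|x|^{-2})$ the adjoint identity just proved identifies the formula-defined $\U^*$ with the restriction of that Hilbert-space adjoint to $\Ccd$. Consequently $\int|\phi|^2/|x|^2=2\int|\U^*\phi|^2/|x|^2$ holds on $\Ccd$, and $\U^*$ extends to a bijection of $L^2(\R^n,|x|^{-2})$ onto itself.

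The only delicate step is the constant-bookkeeping in the second paragraph, particularly the fact that the final evaluation point for $\Rad\phi_*$ is $s=|y|/2$, whereas the earlier analysis of $\U$ via (\ref{eq:RFUf}) used Radon transforms at $s=1/(2|y|)$. This discrepancy reflects the fact that $\U^*$ acts on $\phi$ directly rather than on an inverted version of it; taking the adjoint effectively undoes the inversion map $X=x/|x|^2$ that was built into the derivation of (\ref{eq:RFUf}), which is why the point of evaluation flips from $1/(2|y|)$ to $|y|/2$.
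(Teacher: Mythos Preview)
Your derivation of the adjoint formula is essentially the paper's own argument: start from (\ref{eq:ufxn}), write $\phi_*(x)=\phi(x)/|x|$, apply Fubini, scale the delta distribution, and recognize the inner integral as $(-1)^m\pa_s^m(\Rad\phi_*)$ evaluated at $(y/|y|,|y|/2)$. The bookkeeping you sketch is correct and matches the paper's formula for $\U^*\phi$.

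Where you diverge is in the proof of the isometry and bijectivity of $\U^*$. The paper establishes both directly from the Radon side: it plugs the formula for $\U^*\phi$ into the weighted $L^2$ norm, changes variables back to $S^{n-1}\times\R$, and invokes the Plancherel identity of Theorem~\ref{thm:radon}; bijectivity is then read off from the range description of $\pa_s^{(n-1)/2}\Rad$. You instead invoke Theorem~\ref{thm:isometry}, observe that $\sqrt{2}\,\U$ is unitary on $L^2(\R^n,|x|^{-2})$, identify your formula with the Hilbert-space adjoint on the dense subspace $\Ccd$, and conclude that $\sqrt{2}\,\U^*$ is unitary as well. This is perfectly legitimate since Theorem~\ref{thm:isometry} is proved earlier and independently of Proposition~\ref{prop:adjointun}; indeed it is a bit cleaner, avoiding a second explicit Plancherel computation. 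The paper's direct route, on the other hand, makes the proposition self-contained and exhibits the isometry constant without appealing to operator-theoretic abstractions.
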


%%%%%%%
\begin{proof}
Below, for any $x \in \R^n$, $x \neq 0$, we define $r=|x|$ and $\theta = x/|x|$. From (\ref{eq:ufxn}) we have
\begin{equation*}
(\U f)(x)= \frac{(-1)^m |x|}{\pi^m}\int_{\R^n} f(y) \, \delta^m(|y|^2-2x\cdot y)\; dy,
\qquad \forall f \in \Ccd, ~~ x \neq 0,
\end{equation*}
so for any $\phi \in \Ccd$ we have
\begin{align*}
\int_{\R^n} \frac{ (\U f)(x) \, \phi(x)}{|x|^2} \, dx
& = \frac{(-1)^m}{\pi^m}  \int_{\R^n} |x|^{-1} \phi(x) \int_{\R^n} f(y) \; \delta^m (|y|^2-2x \cdot y) \; dy \; dx \\
& =\frac{(-1)^m}{\pi^m} \int_{\R^n} f(y) \int_{\R^n} |x|^{-1} \phi(x) \; \delta^m (|y|^2-2x \cdot y) \; dx \; dy,
\\
& = \int_{\R^n} \frac{ f(x) \, (\U^* \phi)(x)}{|x|^2} \, dx ,
\end{align*}
where, for $x \neq 0$, we define $\phi_*(x) = |x|^{-1} \phi(x)$ and define
\begin{align*}
(\U^*\phi)(x) 
& =\frac{(-1)^m |x|^2}{\pi^m}  \int_{\R^n} \phi_*(y) \, \delta^m (|x|^2-2x \cdot y) \; dy
=\frac{ r^2}{(-\pi)^m} \int_{\R^n} \phi_*(y) \, \delta^m (r^2-2r\theta \cdot y) \; dy
\\
&=\frac{r }{2 (-2\pi r)^m } \int_{\R^n} \phi_*(y) \, \delta^{m} (r/2 -\theta \cdot y) \; dy 
=\frac{r }{2 (-2\pi r)^m }  (\partial_s^m \Rad \phi_*)(\theta,r/2).
\end{align*}

Further, using Theorem \ref{thm:radon}, we have
\begin{align*}
4 (2 \pi)^{2m} \, \int_{\R^n} \frac{ |( \U^* \phi)(x)|^2}{|x|^2} \, dx
& = \int_{|\theta|=1} \int_0^\infty r^{-2m} \,  |\pa_s^m (\Rad \phi_*)(\theta, r/2)|^2 \, r^{n-1} \, dr \, d \theta
\\
& =  2 \int_{|\theta|=1} \int_0^\infty |\pa_s^m (\Rad \phi_*)(\theta, s)|^2 \, ds \, d \theta
= \int_{|\theta|=1} \int_{\R} |\pa_s^m (\Rad \phi_*)(\theta, s)|^2 \, ds \, d \theta
\\
& = 2 (2 \pi)^{2m} \int_{\R^n} |\phi_*(x)|^2 \, dx
= 2 (2 \pi)^{2m} \int_{\R^n} \frac{ |\phi(x)|^2}{|x|^2} \, dx.
\end{align*}

The map $\phi \to \phi_*$ is a linear bijection on $\Ccd$ and also from $L^2(\R^n, |x|^{-2})$ to $L^2(\R^n)$.
Further, from Theorem \ref{thm:radon}, the map $\phi_* \to s^{1-m} \pa_s^m (\Rad \phi_*)(\theta, s)$ has a continuous
linear extension as a bijection from $L^2(\R^n)$ to $L^2_o(\S^{n-1} \times \R, s^{n-3})$ (here "o" stands for odd in $(\theta,s)$) 
and the map
\[
h(\theta,s) \to \tilde{h}(x) = h(x/|x|, |x|)
\]
is a continuous linear bijection from $L^2_o(\S^{n-1} \times \R, s^{2m-2})$ to $L^2(\R^n, |x|^{-2})$ so the statement about the
extension follows.
\end{proof}

We now continue with the proof of Theorem \ref{thm:second} for $\U$. By Theorem \ref{thm:isometry} and the continuous 
extension of $\U$ we have 
\[
2 \int_{\R^n} \frac{((\U f)(x) \, (\U g) (x)}{|x|^2} \, dx =  \int_{\R^n} \frac{ f(x) \, g(x)}{|x|^2} \, dx,
\qquad \forall f, g \in L^2(\R^n, |x|^{-2})
\]
and from Proposition \ref{prop:adjointun}, the density of $\Ccd$ in $L^2(\R^n, |x|^{-2})$, and the continuous linear extension
of $\U^*$ we have
\[
\int_{\R^n} \frac{(\U f) (x) \, \phi(x)}{|x|^2} \, dx = \int_{\R^n} \frac{ f(x) \, (\U^* \phi)(x)}{|x|^2} \, dx,
\qquad \forall f, \phi \in L^2(\R^n, |x|^{-2}).
\]
Hence applying the first relation to $\phi = \U g$ we conclude that
\[
2 \U^* \U g = g, \qquad \forall g \in L^2(\R^n, |x|^{-2}).
\]

% !TEX root = driver.tex
%

\section{The inversion of $\V$}

We give the proofs of Theorems \ref{thm:isometry} - \ref{thm:second} for $\V$. Below $n=2m+1$ for some integer $m \geq 1$

\subsection{Proof of Theorem \ref{thm:isometry} for $\V$}

If $g \in \Ccd$ then the IVP (\ref{eq:vde})-
(\ref{eq:vic}) has a unique smooth solution and, as argued for the $\U$ case, $\V g \in \dot{C}^\infty(\R^n)$. Further
(see page 682 of \cite{CHII})
\begin{equation*}
v(x,t) = \frac{\sqrt{\pi}}{2\Gamma(\frac{n}{2})} D^{(n-3)/2}
\left( t^{n-2}(\M g)(x,|t|) \right), \quad \forall (x,t)\in \R^n\times \R, ~ t \neq 0
\end{equation*}
so, for $t>0$, we have
\begin{align*}
v(x,t)
&= \frac{1}{2 \pi^m}  D^{m-1}
 \left(\int_{\R^n} g(y) \, \delta(t^2 - |y-x|^2)\; dy\right)\\
&= \frac{1}{2 \pi^m} \int_{\R^n} g(y) \, \delta^{m-1}(t^2 -|y-x|^2)\; dy,
\end{align*}
and hence
\begin{align*}
(\V g)(x)= v(x, |x|) &= \frac{1}{2 \pi^m }
\int_{\R^n} g(y)\, \delta^{m-1}(|x|^2 - |y-x|^2)\; dy\\
&= \frac{1}{2 \pi^m} \int_{\R^n} g(y) \, \delta^{m-1}(2 x \cdot y - |y|^2)\; dy,
\qquad \forall x \in \R^n, ~ x \neq 0.
\end{align*}
Define $G(X)=g(X/|X|^2) \,|X|^{-n-1}$ and note that $G(X)\in \Ccd$  by proposition \ref{prop:reflection}. 
For $x \in \R^n$, $x \neq 0$, let $\theta=x/|x|$, $s = \frac{1}{2 |x|}$ - we have
\begin{align*}
2 \pi^m |2x|^{m} (\V g)(x)
&  =\int_{ \R^n} g(y)\, |2x|^m \, \delta^{m-1}(2x\cdot y - |y|^2)\; dy
\\
& = \int_{\R^n} \frac{g(y)}{|y|^{2m}} \, \delta^{m-1} \left (  \frac{x}{|x|}\cdot \frac{y}{|y|^2} - \frac{1}{2|x|} \right )\; dy
\\
&= \int_{\R^n} \frac{g(y)}{|y|^{2m}} \, \delta^{m-1} \left (\theta\cdot \frac{y}{|y|^2} -s \right )\; dy,
\tag*{use $Y=y/|y|^2$}
\\
&= \int_{\R^n} G(Y) \, \delta^{m-1}(\theta\cdot Y -s)\; dY, 
\\
&= (-1)^{m-1} \partial_s^{m-1}(\Rad G)(\theta,s).
\end{align*}
Hence
\beqn
\pa_s^{m-1}(\Rad G)(\theta,s) =\frac{(-1)^{m-1} 2\pi ^m}{s^m} (\V g)(\frac{\theta}{2s}),
\qquad \forall s>0, \, |\theta|=1,
\label{eq:RGVgs}
\eeqn
so, if we use $r=\frac{1}{2s}$ then
\beqn
\pa_s^m (\Rad G)(\theta,s) 
= 4 (-2 \pi)^m \, r^2 \, \pa_r ( r^m (\V g)(r \theta) ), \qquad \forall r>0, ~ |\theta|=1.
\label{eq:RGVg}
\eeqn
%%%
%
Since $G \in \Ccd$ and $(\Rad G)(-\theta, -s) = (\Rad G)(\theta, s)$, from Theorem \ref{thm:radon}
we have
\begin{align*}
\int_{\R^n}|G(X)|^2\; dX 
& = \frac{1}{2(2\pi)^{2m}}\int_{S^{n-1}}\int_{-\infty}^\infty |\pa_s^{m}(\Rad G)(\theta, s)|^2 \; ds \; d\theta
\\
& =  \frac{1}{(2\pi)^{2m}} \int_{S^{n-1}}\int_0^\infty |\pa_s^{m}(\Rad G)(\theta, s)|^2 \; ds \; d\theta
\\
& = 16  \int_{S^{n-1}}\int_0^\infty r^4  \,|\pa_r \left ( r^m (\V g)(r\theta) \right )|^2 \; ds \; d\theta
\qquad \text{here ~} r=\frac{1}{2s}
\\
& = 8 \int_{S^{n-1}}\int_0^\infty r^2 \, |\pa_r \left ( r^m (\V g)(r\theta) \right )|^2 \; dr \; d\theta
\\
& = 8 \int_{\R^n} r^2  \, | r^{-m} \pa_r \left ( r^m (\V g)(x) \right )|^2 \; dx,
\qquad \qquad \text{here~} r=|x|.
\end{align*}
We also have
\[
\int_{\R^n} |G(X)|^2 \, dX = \int_{\R^n} (|x|^{n+1} g(x) )^2 \, |x|^{-2n} \, dx 
= \int_{\R^n} |x|^2 \, |g(x)|^2 \, dx;
\]
hence we obtain the isometry
\[
\int_{\R^n} |x|^2 \, |g(x)|^2 \, dx
= 8 \int_{\R^n}  r^2 \,  | r^{-m} \pa_r \left ( r^m (\V g)(x) \right )|^2 \; dx,
\qquad \qquad \text{here~} r=|x|.
\]

From Theorem \ref{thm:radon} and that $\Ccd$ is dense in $L^2(\R^n)$ the map 
\[
 G \to s^m \pa_s^m (\Rad G)(\theta,s)
\]
from
$\Ccd$ to $\S_e(S^{n-1} \times \R)$ has an extension as a continuous linear bijection from $L^2(\R^n)$ to
$L^2_e(S^{n-1} \times \R, s^{-2m})$. So from proposition \ref{prop:reflection}, the map 
\[
g(x) \to G(X) \to s^m \pa_s^m (\Rad G)(\theta,s)
\]
from $\Ccd$ to $\S_e(S^{n-1} \times \R)$ has an
extension as a continuous linear bijection from $L^2(\R^n, |x|^2)$ to $L_e^2(S^{n-1} \times \R, s^{-2m})$.

Given $h : \R^n \to \R$, if we define $\tilde{h} : S^{n-1} \times (\R \setminus \{0\}) \to \R$ with
$\tilde{h}(\theta,s) = h(\theta/(2s))$ then, as shown in the proof of Theorem \ref{thm:isometry} for $\U$, the
map $ h \to \tilde{h}$ is a continuous linear bijection from $L^2(\R^n, |x|^{-2})$ to 
$L^2_e(S^{n-1} \times \R, s^{-2m})$. 

From (\ref{eq:RGVg}) we have 
\[
s^m \pa_s^m (\Rad G)(\theta,s) 
= c  \, r^{2-m} \, \pa_r ( r^m (\V g)(r \theta) )|_{r = 1/(2s)} , \qquad \forall s>0, ~ |\theta|=1,
\]
so using the results in the previous two paragraphs, the isometry $g \to  r^{2-m} \, \pa_r ( r^m (\V g)(r \theta) )$ has a 
continuous
linear extension which is a bijection from $L^2(\R^n, |x|^2)$ to $L^2(\R^n, |x|^{-2})$. This is equivalent to the 
statement associated
with $\V$ in Theorem \ref{thm:isometry}.

%%%%%%%%%%%%%%%%

\subsection{Proof of theorem \ref{thm:first} for $\V$}
%%%%%%%%%%%%%%%%%%%%

 If $g \in \Ccd$ and we define $G(X) = g(X/|X|^2) \, |X|^{-n-1}$ then $G \in \Ccd$ and from (\ref{eq:RGVgs}) we have
 \[
 \pa_s^{m-1}(\Rad G)(\theta,s) =\frac{(-1)^{m-1} 2\pi ^m}{s^m} (\V g)(\frac{\theta}{2s}),
\qquad \forall \theta\in S^{n-1}, s>0.
\]
For $s<0$, noting that $(\Rad G)(\theta,s) = (\Rad G)(-\theta,-s)$ we have
\begin{align*}
\pa_s^{m-1}(\Rad G)(\theta,s) 
& = (-1)^{m-1} (\pa_s^{m-1}(\Rad G))(-\theta,-s)
\\
& = \frac{2 \pi^m}{ (-s)^m} (\V g)(-\theta/(-2s) )
\\
& =  \frac{2 \pi^m}{ (-s)^m} (\V g)(\theta/(2s) ),
\end{align*}
so
\[
\pa_s ^{m-1}(\Rad G)(\theta,s) =
\frac{(-1)^{m-1}2\pi^{m}}{s^{m-1}|s|}(\V g)(\theta/(2s) ),
\qquad \forall ~s \in \R, ~s \neq 0, ~|\theta|=1
\]
hence
\[
\partial_s ^{n-1}(\Rad G)(\theta,s) 
= \partial_s ^{m+1}\left( \frac{(-1)^{m-1}2\pi^{m}}{s^{m-1}|s|}(\V g)(\theta/(2s)) \right),
 \qquad \forall ~s \in \R, ~s \neq 0, ~|\theta|=1.
\]
Using the Radon transform inversion formula (see Theorem \ref{thm:radon}), for any $X \neq 0$, we have
\begin{align*}
G(X) 
& =\frac{(-1)^m}{2(2\pi)^{2m}}\int_{S^{n-1}} \pa_s ^{n-1}(\Rad G)(\theta,s)|_{s=X\cdot \theta}\; d\theta
\\
&= -\frac{1}{(4\pi)^m}\int_{S^{n-1}} \pa_s ^{m+1}
\left( \frac{ (\V g)(\theta/(2s) ) }{s^{m-1}|s|} \right)\bigg|_{s=X\cdot\theta}\; d\theta,
\end{align*}
implying
\begin{align*}
g(x) 
& = \frac{-1}{ (4\pi)^m |x|^{n+1} }\int_{S^{n-1}} \pa_s ^{m+1}
\left( \frac{ (\V g)(\theta/(2s) ) }{s^{m-1}|s|} \right)\bigg|_{s=x\cdot\theta/|x|^2}\; d\theta, 
\qquad \forall x \in \R^n, ~ x \neq 0,
\end{align*}
which proves the theorem.

%%%%%%%%%%%%%%%%%%%%%%%%%%%%%%%%%%%%%
\subsection{Proof of theorem \ref{thm:second} for $\V$}

We plan to use the isometry of $\V$ to give another inverse for $\V$. Towards that, we construct the adjoint of $\V$ 
associated with the inner products suggested by the isometry of $\V$. 
\begin{prop}[\bf The adjoint of $\mathcal{V}$]\label{prop:adjointvn} For odd $n=2m+1$ we have
\beqn
\int_{\R^n} |x|^2 \,  \phi(x) \, |x|^{-m} \pa_r ( |x|^m \, (\V g)(x) ) \,  dx 
= \int_{\R^n} |x|^2  \, (\V^* \phi )(x) \, g(x) \, dx,
\qquad \forall \, g, \phi  \in \Ccd
\eeqn
where
\[
(\V^* \phi)(x) = \frac{1}{4  (-2 \pi)^m |x|^{m+1} } ( \pa_s^m \Rad \phi^*)(x/|x|, |x|/2) 
 \, \phi(x) \, dx, \qquad x \neq 0
\]
where $\phi^*(x)=|x| \, \phi(x)$.
Further
\[
\int_{\R^n} |x|^2  \,  | \phi(x)|^2 \, dx = 8 \int_{\R^n} |x|^2   |(V^* \phi)(x)|^2 \, dx, \qquad 
\forall \phi \in \Ccd
\]
and the map $\V^* : \Ccd \to L^2(\R^n, |x|^2)$ has a continuous linear extension as a {\bf bijection} from
$L^2(\R^n, |x|^2)$ to itself.

\end{prop}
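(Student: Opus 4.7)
The plan is to mirror the proof of Proposition \ref{prop:adjointun} for $\U^*$, with the added twist coming from the radial derivative on the $\V$ side. The crucial first step is to derive a clean closed form for the operator $g \mapsto T g := |x|^{-m} \pa_r(|x|^m \V g)$. Starting from the representation
\[
(\V g)(x) = \frac{1}{2 \pi^m} \int_{\R^n} g(y) \, \delta^{m-1}(2 x \cdot y - |y|^2) \, dy
\]
obtained in the proof of Theorem \ref{thm:isometry} for $\V$, I would differentiate under the integral sign. Two terms appear: one of the form $\frac{m}{2 \pi^m r} \int g \, \delta^{m-1} \, dy$ from differentiating the prefactor $r^m$, and one of the form $\frac{1}{\pi^m} \int g \, (\theta \cdot y) \, \delta^m \, dy$ from the chain rule on $\delta^{m-1}(2 r \theta \cdot y - |y|^2)$. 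Applying the distributional identity $A \, \delta^m(A) = -m \, \delta^{m-1}(A)$ with $A = 2 x \cdot y - |y|^2$ rewrites the first term as a sum of a $(\theta \cdot y)\, \delta^m$ piece and a $|y|^2\, \delta^m$ piece; the $(\theta \cdot y) \, \delta^m$ piece cancels exactly against the second term, leaving the compact identity
\[
(T g)(x) = \frac{1}{2 \pi^m \, |x|} \int_{\R^n} g(y) \, |y|^2 \, \delta^m(2 x \cdot y - |y|^2) \, dy.
\]

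With this in hand, I would substitute into $\int |x|^2 \phi(x) (T g)(x) \, dx$ and swap the integrations by Fubini; reading off the inner integral, with $\phi^*(x) = |x| \phi(x)$, gives
\[
(\V^* \phi)(y) = \frac{1}{2 \pi^m} \int_{\R^n} \phi^*(x) \, \delta^m(2 x \cdot y - |y|^2) \, dx.
\]
Factoring $2|y|$ from the argument of the delta and invoking the homogeneity $\delta^m(a t) = a^{-m-1} \delta^m(t)$ for $a > 0$ identifies the remaining integral as $(-1)^m (\pa_s^m \Rad \phi^*)(y/|y|, |y|/2)$, which produces the formula in the proposition after collecting constants. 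For the isometry, I would substitute this explicit formula into $\int |y|^2 |\V^* \phi|^2 \, dy$, pass to spherical coordinates, and change variables $s = |y|/2$. Since $(\pa_s^m \Rad \phi^*)(-\theta, -s) = (-1)^m (\pa_s^m \Rad \phi^*)(\theta, s)$, the modulus-squared is invariant under the antipodal map, so integration over $s > 0$ is half of that over $\R$, and Theorem \ref{thm:radon}(a) (the Radon Plancherel identity) then collapses the expression to $\tfrac{1}{8} \int |\phi^*|^2 \, dx = \tfrac{1}{8} \int |x|^2 |\phi|^2 \, dx$.

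Finally, for the continuous extension as a bijection $L^2(\R^n, |x|^2) \to L^2(\R^n, |x|^2)$, I would factor $\V^*$ as a composition of continuous linear bijections: $\phi \mapsto \phi^*$ from $L^2(\R^n, |x|^2)$ onto $L^2(\R^n)$; $\phi^* \mapsto \pa_s^m \Rad \phi^*$ from $L^2(\R^n)$ onto $L^2_\sigma(S^{n-1} \times \R)$ by Theorem \ref{thm:radon}(c); the sampling $h(\theta, s) \mapsto h(y/|y|, |y|/2)$, which a direct norm computation using the $(-1)^m$ symmetry of $h$ shows is a bijection $L^2_\sigma(S^{n-1} \times \R) \to L^2(\R^n, |y|^{1-n})$; and finally division by $|y|^{m+1}$, which is a bijection $L^2(\R^n, |y|^{1-n}) \to L^2(\R^n, |y|^2)$ because $n - 1 = 2m$. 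The main obstacle in the whole argument is the delicate cancellation that produces the clean formula for $T g$; once that closed form is in hand, the remaining steps are bookkeeping that parallels the $\U^*$ proof.
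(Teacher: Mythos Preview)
Your proposal is correct and reaches the same formulas, but the route differs from the paper's in one key place: how you handle the radial derivative on the $\V$ side. The paper does not rederive an integral formula for $Tg = |x|^{-m}\pa_r(|x|^m\V g)$; instead it quotes the earlier identity (\ref{eq:RGVg}),
\[
4(-2\pi)^m |x|^2\,\pa_r\bigl(r^m(\V g)(x)\bigr) = \pa_s^m(\Rad G)(\theta,1/(2r)),\qquad G(Y)=|Y|^{-n-1}g(Y/|Y|^2),
\]
substitutes this into the left side of the adjoint relation, expands $\pa_s^m(\Rad G)$ as a $\delta^m$-integral in $Y$, and then undoes the inversion $Y=y/|y|^2$ to read off $\V^*$. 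Your approach instead stays in the original coordinates: you differentiate the $\delta^{m-1}$ representation of $\V g$ directly and use the distributional identity $A\,\delta^m(A)=-m\,\delta^{m-1}(A)$ to produce the cancellation, arriving at the clean closed form
\[
(Tg)(x)=\frac{1}{2\pi^m|x|}\int_{\R^n} g(y)\,|y|^2\,\delta^m(2x\cdot y-|y|^2)\,dy,
\]
after which Fubini and homogeneity give $\V^*$ immediately. Your argument is self-contained and avoids the detour through the inversion map and the auxiliary function $G$; the paper's argument is shorter on the page because it recycles work already done for Theorem~\ref{thm:isometry}. The isometry computation and the factorization of $\V^*$ into bijections that you outline match the paper's treatment essentially line for line.
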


%%%%%%%%%%%%%%
%
\begin{proof}
 Let $\phi, g \in \Ccd$. Then, from (\ref{eq:RGVg}) in  the proof of Theorem \ref{thm:isometry} for $\V$, we have
 \[
4(-2 \pi) ^m  |x|^2 \, \pa_r ( r^m (\V g)(x) ) = \pa_s^m (\Rad G)(\theta, 1/(2r)), \qquad \forall x \neq 0
\]
where $r=|x|$, $\theta=x/|x|$ and
\[
G(Y) = |Y|^{-n-1} g(Y/|Y|^2).
\]
Hence, using the substitution $Y = y/|y|^2$ and the homogeneity of the delta function, we have
\begin{align*}
4 (-2 \pi)^m \int_{\R^n} & |x|^2 \,  \phi(x) \, |x|^{-m} \pa_r ( |x|^m \, (\V g)(x) ) \,  dx 
\\
&=
\int_{\R^n} |x|^{-m} \, \phi(x) \, \pa_s^m (\Rad G)(\theta, 1/(2r)) \, dx
\\
& = \int_{\R^n} |x|^{-m} \, \phi(x) \, \int_{\R^n} \delta^m( 1/(2r) - Y \cdot \theta) \, G(Y) \, dY \, dx
\\
& = \int_{\R^n} |x|^{-m} \, \phi(x) \, \int_{\R^n} \delta^m( 1/(2r) - y \cdot \theta/|y|^2 ) \, |y|^{n+1} g(y) \, |y|^{-2n} \, dy \, dx
\\
& = 2^{m+1} \int_{\R^n} |x|^{-m} \, r^{m+1} \, \phi(x) \, \int_{\R^n} \delta^m( |y|^2 - 2x \cdot y) \,  |y|^{2m+2} \, |y|^{1-n} 
 g(y) \, dy \, dx
 \\
 & = 2^{m+1} \int_{\R^n} |x| \,  \phi(x) \, \int_{\R^n} \delta^m( |y|^2 - 2x \cdot y) \,  g(y) \, |y|^2 \, dy \, dx
 \\
 & = 4 (-2 \pi)^m \int_{\R^n} |y|^2 \, (\V^* \phi)(y) \, g(y) \, dy
\end{align*}
where, for $y \neq 0$, we have
\begin{align*}
(\V^* \phi)(y)  
& = \frac{1}{2 (-\pi)^m} \int_{\R^n} |x| \, \delta^m( |y|^2 - 2x \cdot y) \, \phi(x) \, dx
\\
& = \frac{1}{4  (-2 \pi)^m |y|^{m+1} } \int_{\R^n}  \delta^m( |y|/2 - x \cdot y/|y|) \, |x| \, \phi(x) \, dx
\\
& = \frac{1}{4  (-2 \pi)^m |y|^{m+1} } ( \pa_s^m \Rad \phi^*)(y/|y|, |y|/2) 
\end{align*}
where $\phi^*(x) = |x| \phi(x)$.

Now, using the isometry of the Radon transform stated in Theorem \ref{thm:radon}, we have
\begin{align*}
4^2 (2 \pi)^{2m} & \int_{\R^n}  |x|^2 \, |(\V^* \phi)(x)|^2 \, dx
 = \int_{\R^n} |x|^{-2m} \, | (\pa_s^m \Rad \phi^*)(x/|x|, |x|/2) |^2 \, dx
\\
& = \int_{|\theta|=1} \int_0^\infty |(\pa_s^m \Rad \phi^*)(\theta, r/2)|^2 \, dr \, d \theta
 = 2 \int_{|\theta|=1} \int_0^\infty  |(\pa_s^m \Rad \phi^*)(\theta, s)|^2 \, ds \, d \theta
\\
& = \int_{|\theta|=1} \int_\R |(\pa_s^m \Rad \phi^*)(\theta, s)|^2 \, ds \, d \theta
 = 2 (2 \pi)^{2m} \int_{\R^n} |\phi^*(x)|^2 \, dx
\\
& = 2 (2 \pi)^{2m} \int_{\R^n} |x|^2 \, |\phi(x)|^2 \, dx 
\end{align*}
which proves the isometry for $\V^*$.

The result about the range of the extension of $\V^*$ also follows easily from the range of $\Rad$.

\end{proof}

%%%%%%%%%%%%%%%%%%%

The proof of Theorem \ref{thm:second} for the $\V$ case is just an imitation of the proof for the $\U$ case.

% !TEX root = driver.tex
%

\section{Proof of theorem \ref{thm:mean}}\label{sec:mean}

First, as in proofs of the other theorems, we express the spherical averages over spheres through the origin as a Radon transform. Given $h \in \Ccd$ we define $H(X)$ as
\[
H(X)  =|X|^{2-2n} \, h(X/|X|^2), \qquad X \in \R^n, ~ X \neq 0;
\]
note $H \in \Ccd$ from proposition \ref{prop:reflection}. For $x \neq 0$,
using the substitution $Y=y/|y|^2$ and the homogeneity of $\delta(s)$, we have
\begin{align}
(\M h)(x,|x|) &= \frac{1}{\omega_{n-1} |x|^{n-1}} \int_{|y-x|=|x|} h(y)\; dS_y
\nn
\\
& = \frac{2}{\omega_{n-1}|x|^{n-2}}\int_{\R^n} h(y) \delta(|y-x|^2-|x|^2 )\; dy 
\nn
\\
&=\frac{2}{\omega_{n-1}|x|^{n-2}}\int_{\R^n} h(y) \delta(|y|^2-2y \cdot x)\; dy 
\nn
\\
&=\frac{2}{\omega_{n-1} |x|^{n-2}} \int_{\R^n} h(Y/|Y|^2) \, \delta(|Y|^{-2} - 2 |Y|^{-2} Y \cdot x)\; |Y|^{-2n} dY
\nn
\\
&=\frac{1}{\omega_{n-1} |x| ^{n-1}}\int_{\R^n}H(Y) \, \delta(Y \cdot x/|x| - 1/(2|x|) )\; dY
\nn
\\
&=\frac{1}{\omega_{n-1} |x|^{n-1}} (\Rad H)(x/|x|,1/(2|x|)).
\nn
\end{align}
We may rewrite this as
\beqn
(\Rad H)(\theta, 1/(2\rho)) = \omega_{n-1} \rho^{n-1} (\M h)( x, |x|), \qquad x \in \R^n, ~x \neq 0
\label{eq:xrt}
\eeqn
where $\rho=|x|$ and $\theta=x/|x|$, or as
\beqn
(\Rad H)(\theta, s) =  \omega_{n-1} 2^{1-n} s^{1-n} (\M h)( \theta/(2s), 1/(2s) ), \qquad s>0, ~ |\theta|=1.
\label{eq:temp21}
\eeqn
Note that these relations are true for odd and even $n$.

For odd $n$, using Theorem \ref{thm:radon} and the substitution $x=X/|X|^2$ we have
\begin{align*}
\int_{\R^n} |x|^{2n-4} \, |h(x)|^2 \, dx 
& = \int_{\R^n} |X|^{4-2n} \, | h(X/|X|^2) |^2 \, |X|^{-2n} \, dX
\\
& = \int_{\R^n} |H(X)|^2 \, dX
\\
& = \frac{1}{2(2\pi)^{n-1}}\int_{S^{n-1}} \int_{-\infty}^\infty | \partial_s ^{\frac{n-1}{2}}(\Rad H)(\theta,s) |^2\; ds\; d\theta.
\\
&=\frac{1}{(2\pi)^{n-1}} \int_{S^{n-1}} \int_{0}^\infty | \partial_s ^{\frac{n-1}{2}}(\Rad H)(\theta,s) |^2\; ds\; d\theta.
\end{align*}
So using $\omega_{n-1} = 2 \pi^{n/2}/\Gamma(n/2)$, the substitution $s=1/(2 \rho)$, noting 
$\pa_s = -2 \rho^2 \pa_\rho$, and (\ref{eq:xrt}), we obtain
\begin{align*}
\int_{\R^n} |x|^{2n-4} \, |h(x)|^2 \, dx 
& = \frac{ 2^{n-2}}{ (2 \pi)^{n-1} } \int_{S^{n-1}} \int_0^\infty 
| (\rho^2 \pa_\rho)^{(n-1)/2} (\Rad H)(\theta, 1/(2 \rho) ) |^2 \, \rho^{-2} \, d \rho \, d \theta
\\
& = \frac{ \omega_{n-1}^2 }{ 2 \pi^{n-1} }
 \int_{S^{n-1}} \int_0^\infty | ( \rho^2 \pa_\rho)^{(n-1)/2} ( \rho^{n-1} (\M h)(y, |y|) ) |^2  \, \rho^{-2} \, d \rho \, d \theta
 \\
 & = \frac{2 \pi}{\Gamma(n/2)^2}  \int_{S^{n-1}} \int_0^\infty | (\rho^2 \pa_\rho)^{(n-1)/2} ( \rho^{n-1} (\M h)(y, |y|) ) |^2  \,
  \rho^{-2} \, d\rho \, d \theta
  \\
  & = \frac{2 \pi}{\Gamma(n/2)^2}  \int_{\R^n}  | (\rho^2 \pa_\rho)^{(n-1)/2} ( \rho^{n-1} (\M h)(y, |y|) ) |^2  \, |y|^{-n-1} \, dy
\end{align*}
where $y=\rho \theta$ on the RHS. This proves the isometry.

Next we prove the inversion formula.
Using  $(\Rad H)(-\theta,-s) = (\Rad H)(\theta, s)$ in the Radon inversion formula for odd $n$ in Theorem \ref{thm:radon}, we 
have
\begin{align*}
\frac{2(2\pi)^{n-1}}{(-1)^{\frac{n-1}{2}}} H(X) & = \int_{S^{n-1}} \pa_s^{n-1} (\Rad H)(\theta, s)|_{s=X\cdot \theta}\; d\theta.
\\
& =   \int_{X \cdot \theta >0} \pa_s^{n-1} (\Rad H)(\theta, s)|_{s=X\cdot \theta}\; d\theta
+   \int_{X \cdot \theta <0} \pa_s^{n-1} (\Rad H)(\theta, s)|_{s=X\cdot \theta}\; d\theta
\\
& =   \int_{X \cdot \theta >0} \pa_s^{n-1} (\Rad H)(\theta, s)|_{s=X\cdot \theta}\; d\theta
+   \int_{X \cdot \theta <0} \pa_s^{n-1} (\Rad H)(-\theta, -s)|_{s=X\cdot \theta}\; d\theta
\\
& =   \int_{X \cdot \theta >0} \pa_s^{n-1} (\Rad H)(\theta, s)|_{s=X\cdot \theta}\; d\theta
+   \int_{X \cdot \theta >0} \pa_s^{n-1} (\Rad H)(\theta, -s)|_{s=-X\cdot \theta}\; d\theta
\\
& = 2  \int_{X \cdot \theta >0} \pa_s^{n-1} (\Rad H)(\theta, s)|_{s=X\cdot \theta}\; d\theta.
\end{align*}
Using this,  the substitution $s=1/(2 \rho)$, noting 
$\pa_s = -2 \rho^2 \pa_\rho$,  and (\ref{eq:temp21}) we have
\begin{align}
\frac{(2\pi)^{n-1}}{(-1)^{\frac{n-1}{2}}} h(x) & = \frac{(2\pi)^{n-1}}{(-1)^{\frac{n-1}{2}}} \, |x|^{2-2n} H(x/|x|^2) 
\nn
\\
& = |x|^{2-2n}  \int_{x \cdot \theta >0} \pa_s^{n-1} (\Rad H)(\theta, s)|_{s=x\cdot \theta/|x|^2 }\; d\theta
\nn
\\
& = 2^{1-n} \omega_{n-1} \, |x|^{2-2n}  \int_{x \cdot \theta >0} \partial_ s^{n-1} ( s^{1-n}  (\M h)(\theta/(2s), 1/(2s) ) ) 
|_{s=x\cdot \theta/|x|^2}\; d\theta
\nn
\\
& = 2^{n-1} \omega_{n-1} \, |x|^{2-2n} \,
\int_{x \cdot \theta >0} ( \rho^2 \partial_ \rho)^{n-1} ( \rho^{n-1} 
 (\M h)( \rho \theta, \rho ) ) |_{\rho=|x|^2/(2x \cdot \theta)}\; 
d\theta.
\label{eq:temp77}
\end{align}

Now, using $y= \rho \theta$, we have
\begin{align*}
\int_{x \cdot \theta >0} f(\rho, \theta)|_{\rho=|x|^2/(2 x \cdot \theta)} \, d \theta
& =
\int_{x \cdot \theta >0} \int_0^\infty  f(\rho, \theta) \, \delta( \rho - |x|^2/(2 x \cdot \theta) ) \, d \rho \, d \theta
\\
& = \int_{|\theta|=1} \int_0^\infty 2 (x \cdot \theta) \,  f(\rho, \theta) \, \delta( 2 x \cdot (\rho \theta) - |x|^2 ) \, d \rho \, d \theta
\\
& = \int_{|\theta|=1} \int_0^\infty \rho ^{ -n} \, 2 (x \cdot \rho \theta) \,  f(\rho, \theta) \, 
\delta( 2 x \cdot (\rho \theta) - |x|^2 ) \, \rho^{n-1}
\, d \rho \, d \theta
\\
& = \int_{\R^n} 2 (x \cdot y) \, |y|^{-n} \, f(\rho, \theta) \, \delta( 2 x \cdot y - |x|^2) \, dy
\\
& = |x|^2 \,  \int_{\R^n} |y|^{-n} \, f(\rho, \theta) \, \delta( 2 x \cdot y - |x|^2) \, dy
\\
& = \frac{|x|}{2} \int_{2 y \cdot x = |x|^2} |y|^{-n} \, f(\rho, \theta) \, dS_y.
\end{align*}
Hence, using (\ref{eq:temp77}) (below $\rho = |y|$)
\begin{align*}
h(x) = \frac{ (-1)^{(n-1)/2} \omega_{n-1}}{ 2\, \pi^{n-1}} |x|^{3-2n} \,
\int_{2 y \cdot x = |x|^2}  |y|^{-n} \, ( \rho^2 \partial_ \rho)^{n-1} ( \rho^{n-1}  (\M h)( y, |y|) ) \, dS_y.
\end{align*}

\section{Acknowledgments}
A substantial part of this article is based on \cite{yuan}, the PhD thesis of the second listed author, written under the 
supervision of the first listed author. We thank Todd Quinto and Venky Krishnan for discussions about the problem considered in 
this article. We also thank the
referees for a careful reading of the manuscript and pointing out errors which affected some of the 
formulas in the theorems. Rakesh's work was partially supported by NSF grants DMS 0907909 and DMS 1312708.
%
% !TEX root = driver.tex
%
% This is the Bibliography file
%
% For 100-999 change 99 to 999; for 1000-9999 change 99 to 9999 

%%%%%%%%%%%%%%%%%%%
\end{document}